\theoremstyle{plain}
\newtheorem{corollary}{Corollary}
\newtheorem{definition}{Definition}
\newtheorem{lemma}{Lemma}
\newtheorem{proposition}{Proposition}
\newtheorem{remark}{Remark}
\newtheorem{theorem}{Theorem}
\numberwithin{equation}{section}
\begin{document}
\title[]{Sequential convergence of a solution to the Chern--Simons--Schr{\"o}dinger equation}

\author{Benjamin Dodson}
\date{\today}

\begin{abstract}
In this paper we prove a sequential convergence result for blowup solutions to the $m$-equivariant, self-dual Chern--Simons--Schr{\"o}dinger equation. We show that if $u$ has mass less than twice the mass of the soliton, a blowup solution converges to the soliton along a subsequence of times that converges to the blowup time.
\end{abstract}
\maketitle

\section{Introduction}
In this paper we prove a sequential convergence result for blowup solutions to the self-dual, $m$-equivariant Chern--Simons--Schr{\"o}dinger equation,
\begin{equation}\label{1.1}
i (\partial_{t} + i A_{t}[u])u + \partial_{r}^{2} u + \frac{1}{r} \partial_{r} u - (\frac{m + A_{\theta}[u]}{r})^{2} u + |u|^{2} u = 0,
\end{equation}
where $m \in \mathbb{Z}$ and $m \geq 1$ is called the equivariance index and the connection components $A_{t}[u]$ and $A_{\theta}[u]$ are given by
\begin{equation}\label{1.2}
A_{t}[u] = -\int_{r}^{\infty} (m + A_{\theta}[u]) |u|^{2} \frac{dr'}{r'}, \qquad A_{\theta}[u] = -\frac{1}{2} \int_{0}^{r} |u|^{2} r' dr'.
\end{equation}
Equation $(\ref{1.1})$ is derived after fixing the Coulomb gauge condition and imposing the equivariant symmetry on the scalar field $\phi$,
\begin{equation}\label{1.3}
\phi(t, x) = u(t, r) e^{im \theta}.
\end{equation}
See \cite{kim2020blow}, \cite{kim2020construction}, and \cite{kim2019pseudoconformal} for more information on this reduction.\medskip

Solutions to $(\ref{1.1})$ have the conserved quantities mass,
\begin{equation}\label{1.4}
M[u(t)] = \int |u(t, x)|^{2} dx,
\end{equation}
and energy,
\begin{equation}\label{1.5}
E[u(t)] = \int \frac{1}{2} |\partial_{r} u|^{2} + \frac{1}{2} (\frac{m + A_{\theta}[u]}{r})^{2} |u|^{2} - \frac{1}{4} |u|^{4} dx.
\end{equation}

We prove that if $u$ is a blowup solution to $(\ref{1.1})$ on the maximal interval $I = (a, b)$, and $u$ blows up forward in time, then $u$ converges weakly to the soliton solution, along a subsequence of times converging to the blowup time.
\begin{theorem}\label{t1.1}
Suppose $u$ is a blowup solution to the Chern--Simons--Schr{\"o}dinger equation, $(\ref{1.1})$ that satisfies
\begin{equation}\label{1.6}
\| u \|_{L^{2}}^{2} < 16 \pi (m + 1).
\end{equation}

Also suppose that $0 \in I = (a, b)$ is the maximal interval of existence of a solution to $(\ref{1.1})$. Furthermore, suppose that the solution blows up forward in time. That is, if $b_{n} \nearrow b$,
\begin{equation}\label{1.7}
\lim_{n \rightarrow \infty} \| u \|_{L_{t,x}^{4}([0, b_{n}] \times \mathbb{R}^{2})} = \infty.
\end{equation}
Then there exists a sequence, $t_{n} \nearrow b$ and $\lambda(t_{n}) > 0$ such that
\begin{equation}\label{1.8}
\lambda(t_{n}) u(t_{n}, \lambda(t_{n}) r) \rightharpoonup Q, \qquad \text{weakly in} \qquad L^{2},
\end{equation}
where $Q$ is the explicit $m$-equivariant static solution.
\end{theorem}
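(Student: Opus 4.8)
The plan is to use the blowup hypothesis to extract, along a sequence of times approaching $b$, a bubble of non-compactness for the mass-critical rescaling $w\mapsto \lambda w(\lambda\,\cdot\,)$, and then to identify that bubble with the soliton $Q$ by exploiting the self-dual (Bogomolny) structure of the energy together with the mass bound $(\ref{1.6})$.

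I would first record three facts. $(i)$ By the small-data theory and the stability/perturbation theory for $(\ref{1.1})$ — at fixed mass this is a mass-critical Schr\"odinger equation, the connection coefficients being quadratic in $u$ and hence controlled by the conserved mass $(\ref{1.4})$ — the hypothesis $(\ref{1.7})$ is equivalent to the statement that $u$ does not scatter forward in time, i.e.\ $\|u\|_{L^{4}_{t,x}([t',b)\times\mathbb{R}^{2})}=\infty$ for every $t'\in[0,b)$. $(ii)$ Completing the square in $(\ref{1.5})$ and integrating by parts (the boundary term at $r=0$ vanishes because $m\ge 1$ forces vanishing at the origin) gives, for every $m$-equivariant finite-energy $w$,
\[
E[w]=\frac12\left\|\left(\partial_{r}-\frac{m+A_{\theta}[w]}{r}\right)w\right\|_{L^{2}}^{2}\ \ge\ 0,
\]
with equality precisely on the family $\{e^{i\gamma}\lambda Q(\lambda\,\cdot\,):\gamma\in\mathbb{R},\ \lambda>0\}$; moreover $\|Q\|_{L^{2}}^{2}=8\pi(m+1)$, so $(\ref{1.6})$ reads $\|u\|_{L^{2}}^{2}<2\|Q\|_{L^{2}}^{2}$. $(iii)$ When $b<\infty$, the local theory together with conservation of mass forces $\|\partial_{r}u(t)\|_{L^{2}}\to\infty$ as $t\to b$.

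Now to the main argument. Applying a concentration--compactness/profile decomposition in the $m$-equivariant class — where, equivariance being preserved, the only admissible symmetry parameters are the scaling $\lambda_{n}>0$, a time translation, and a phase $\gamma_{n}$ — to the bounded-in-$L^{2}$ sequence $u(t_{n})$, $t_{n}\nearrow b$, and using the stability theory, I obtain a nonlinear profile that fails to scatter forward (otherwise $u$ would scatter), hence sequences $t_{n}\nearrow b$, $\lambda_{n}>0$, $\gamma_{n}\in\mathbb{R}$ such that, along a subsequence, $e^{i\gamma_{n}}\lambda_{n}u(t_{n},\lambda_{n}r)$ converges weakly in $L^{2}$ to a time slice of a forward-non-scattering solution of mass $<2\|Q\|_{L^{2}}^{2}$. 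To identify the weak limit with $Q$ I distinguish two situations. If $\|\partial_{r}u(t_{n})\|_{L^{2}}\to\infty$ (automatic when $b<\infty$), I instead take $\lambda_{n}=\|\partial_{r}Q\|_{L^{2}}/\|\partial_{r}u(t_{n})\|_{L^{2}}\to 0$, so that $\tilde u_{n}:=\lambda_{n}u(t_{n},\lambda_{n}\,\cdot\,)$ has kinetic energy pinned at $\|\partial_{r}Q\|_{L^{2}}^{2}$ and mass pinned at $\|u\|_{L^{2}}^{2}$, while $E[\tilde u_{n}]=\lambda_{n}^{2}E[u]\to 0$ by energy conservation and the $\lambda^{2}$-scaling of $(\ref{1.5})$. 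From $E[\tilde u_{n}]\to 0$ and the pinned kinetic energy one reads off $\liminf\|\tilde u_{n}\|_{L^{4}}^{4}\ge 2\|\partial_{r}Q\|_{L^{2}}^{2}>0$, so the sequence does not vanish; and a splitting into two nontrivial equivariant bubbles is impossible, since by the Bogomolny positivity each bubble — having asymptotically zero energy — would be a rescaled soliton and hence carry mass $\|Q\|_{L^{2}}^{2}$, forcing the total mass to be at least $2\|Q\|_{L^{2}}^{2}$ and contradicting $(\ref{1.6})$; this is the one place the threshold $16\pi(m+1)$ enters. Hence $\tilde u_{n}\rightharpoonup v$ weakly in $L^{2}$ (up to a phase) with $v$ a single zero-energy $m$-equivariant bubble; thus $v\ne 0$, $E[v]=0$, and a further energy accounting — forcing the remainder to carry asymptotically no kinetic energy — pins the scale, giving $v=e^{i\gamma_{0}}Q$. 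Absorbing $\gamma_{0}$ into $\gamma_{n}$ and setting $\lambda(t_{n}):=\lambda_{n}$ yields $(\ref{1.8})$. In the remaining situation ($b=\infty$ with $\|\partial_{r}u(t)\|_{L^{2}}$ bounded — the soliton being the borderline case, already covered since then $E[u]=0$ and $u$ is itself a rescaled soliton), the non-scattering profile is a global non-scattering solution of bounded kinetic energy and mass $<2\|Q\|_{L^{2}}^{2}$; taking it of minimal mass makes it almost periodic modulo scaling and phase, and a virial/Morawetz argument for $(\ref{1.1})$ forces it to be static, hence a rescaled soliton, again giving $(\ref{1.8})$ after adjusting $\lambda_{n}$ and $\gamma_{n}$.

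The main obstacle throughout is the nonlocal, non-additive nature of $A_{\theta}[u]=-\tfrac12\int_{0}^{r}|u|^{2}r'\,dr'$ and $A_{t}[u]$: in the profile decomposition one must show the connection terms decouple across profiles so that masses and energies are asymptotically additive, and in the two-bubble exclusion this is sharper still, because an inner bubble of mass $\mu$ shifts the effective equivariance index seen by an outer bubble from $m$ to $m-\tfrac{\mu}{4\pi}$ — which for $\mu$ near $\|Q\|_{L^{2}}^{2}$ becomes negative — so the Bogomolny accounting that rules out two bubbles must track this shift, and it is precisely this accounting that pins the allowable mass to $2\|Q\|_{L^{2}}^{2}=16\pi(m+1)$. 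A secondary difficulty is that the stability theory for $(\ref{1.1})$ is heavier than for NLS owing to the magnetic and electric potential terms $(m+A_{\theta}[u])/r$ and $A_{t}[u]u$, and must be built on the mass-controlled bilinear/Strichartz estimates of the local theory.
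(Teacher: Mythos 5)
There is a genuine gap, and it occurs at the heart of your argument rather than in the technical periphery. Your main branch ($b<\infty$, or more generally $\|\partial_{r}u(t_{n})\|_{L^{2}}\to\infty$) is built entirely on quantities that are not available under the hypotheses of the theorem: the statement assumes only $u\in L^{2}$ with $\|u\|_{L^{2}}^{2}<16\pi(m+1)$ and the $L^{4}_{t,x}$ blowup criterion, so $\|\partial_{r}u(t)\|_{L^{2}}$ need not be finite, the $H^{1}$ blowup criterion in your fact $(iii)$ does not apply, and ``$E[\tilde u_{n}]=\lambda_{n}^{2}E[u]\to 0$ by energy conservation'' presupposes a finite conserved energy that the solution need not possess (the paper stresses precisely this point: even at threshold mass the blowup solution need not have finite energy). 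The paper's route exists to avoid this: regularity is never assumed for $u$ itself but is \emph{gained} only for the limiting object, by first producing an almost periodic solution and then invoking the extra-regularity theorems of Liu--Smith (Theorems $5.3$ and $5.13$ there, quoted as Theorem \ref{t2.3}), after which the self-dual energy argument and the Liouville-type rigidity (Theorem \ref{t1.5}) can be run. Even within your finite-energy branch, the identification of the weak limit as a zero-energy bubble needs more than is said: $E$ involves the nonlocal $A_{\theta}[u]$, so weak lower semicontinuity of $\frac12\|\mathbf{D}_{w}w\|_{L^{2}}^{2}$ along the sequence, and the non-vanishing of the weak limit \emph{at the kinetic-energy-normalized scale}, are both nontrivial and are asserted rather than proved.

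The second gap is structural and affects your $b=\infty$ branch as well: you pass from ``the profile decomposition of $u(t_{n})$ produces a non-scattering profile'' to ``taking it of minimal mass makes it almost periodic.'' The profile extracted from $u(t_{n})$ is not at your disposal to be chosen minimal; its mass is merely some number in $[\,\|Q\|_{L^{2}}^{2},\|u\|_{L^{2}}^{2})$, and nothing forces it to be almost periodic or a soliton. The paper closes exactly this hole with the set $\mathcal{M}_{u}$ of masses of weak limits achievable along $t_{n}\nearrow b$ (Definition \ref{d4.1}), the diagonal argument showing $\mathcal{M}_{u}$ attains its infimum (Lemma \ref{l4.1}, culminating in $(\ref{4.19})$), the almost periodicity of the minimizer (Lemma \ref{l4.2}), and then Theorem \ref{t2.4}, which itself requires the reduction of $N(t)$ to the three scenarios via $osc(T)$ and $a(t_{0})$ following Killip--Tao--Visan before the rigidity input can be applied. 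Your substitute for all of this --- ``a virial/Morawetz argument for $(\ref{1.1})$ forces it to be static'' --- is an unsupported assertion; no such monotonicity argument above the ground-state mass is known to yield this, and the paper deliberately argues instead through gained regularity, the vanishing of the energy in scenarios $(2)$ and $(3)$ (via $(\ref{2.5})$--$(\ref{2.10})$), and the compactness-based Liouville theorem in scenario $(1)$. Finally, note that even after identifying an abstract minimal object as the soliton, one still must chain the weak limits back to the original solution $u$ (the analogue of $(\ref{3.23})$--$(\ref{3.29})$); your proposal does not address this step.
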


\begin{definition}[Jackiw--Pi vortex]\label{d1.2}
For $m \geq 0$ there is an explicit $m$-equivariant static solution, given by
\begin{equation}\label{1.9}
Q(r) = \sqrt{8} (m + 1) \frac{r^{m}}{1 + r^{2m + 2}}.
\end{equation}
This solution is called the Jackiw--Pi vortex.
\end{definition}
Making a change of variables to

\begin{equation}\label{1.10}
\| Q \|_{L^{2}}^{2} = 2\pi 8(m + 1)^{2} \int_{0}^{\infty} \frac{r^{2m + 1}}{(1 + r^{2m + 2})^{2}} dr, \qquad u = 1 + r^{2m + 2}, \qquad du = 2(m + 1) r^{2m + 1} dr,
\end{equation}
\begin{equation}\label{1.11}
(\ref{1.10}) = 8 \pi (m + 1) \int_{1}^{\infty} \frac{1}{u^{2}} du = 8 \pi (m + 1).
\end{equation}
Therefore, $(\ref{1.3})$ is equivalent to the statement that
\begin{equation}\label{1.12}
\| u \|_{L^{2}}^{2} < 2 \| Q \|_{L^{2}}^{2}.
\end{equation}

Moreover, the Jackiw--Pi vortex solution is the unique minimizer of the energy functional $(\ref{1.5})$. Indeed, observe that the energy functional may be written in the self-dual form 
\begin{equation}\label{1.13}
E[u] = \int \frac{1}{2} |\mathbf{D}_{u} u|^{2},
\end{equation}
where
\begin{equation}\label{1.14}
\mathbf{D}_{u} f = \partial_{r} f - \frac{m + A_{\theta}[u]}{r} f.
\end{equation}
Indeed, expanding $(\ref{1.13})$ using $(\ref{1.14})$,
\begin{equation}\label{1.15}
E[u] = \frac{1}{2} \int |\partial_{r} u|^{2} + \frac{1}{2} \int (\frac{m + A_{\theta}[u]}{r})^{2} |u|^{2} - Re \int (\frac{m + A_{\theta}[u]}{r}) u \overline{\partial_{r} u}.
\end{equation}
Rewriting the last integral in $(\ref{1.15})$ in polar coordinates and integrating by parts in $r$,
\begin{equation}\label{1.16}
\aligned
2 \pi Re \int_{0}^{\infty} (m + A_{\theta}[u]) u (\overline{\partial_{r} u}) dr = \pi Re \int_{0}^{\infty} (m + A_{\theta}[u]) \partial_{r} |u|^{2} dr \\ = -\pi m |u(0)|^{2} - \pi A_{\theta}[u](0) |u(0)|^{2} - \pi \int_{0}^{\infty} \partial_{r} A_{\theta}[u] |u|^{2} dr = \frac{\pi}{2} \int_{0}^{\infty} |u|^{4} r dr = \frac{1}{4} \int |u|^{4} dx.
\endaligned
\end{equation}
\begin{remark}
If $u$ is a Schwarz function with finite energy, then $u(r) \rightarrow 0$ rapidly as $r \rightarrow \infty$ and $u(0) = 0$ if $E[u]$ is finite.
\end{remark}

Furthermore, we can see from Appendix $A$ of \cite{li2022threshold} that $(\ref{1.9})$ is the unique solution to $E[u] = 0$, up to the scaling symmetry of $(\ref{1.1})$,
\begin{equation}\label{1.17}
u(t, r) \mapsto \lambda u(\lambda^{2} t, \lambda r), \qquad \forall \lambda > 0.
\end{equation}
\begin{remark}
A solution to $(\ref{1.1})$ also possesses the phase rotation symmetry,
\begin{equation}\label{1.18}
u(t, r) \mapsto e^{i \gamma} u(t, r), \qquad \gamma \in \mathbb{R}.
\end{equation}
\end{remark}

\subsection{Outline of previous results}
The result of \cite{liu2016global} implies Theorem $\ref{t1.1}$ is vacuously true when $\| u_{0} \|_{L^{2}}^{2} < \| Q \|_{L^{2}}^{2} = 8 \pi (m + 1)$.
\begin{theorem}\label{t1.3}
Suppose $u$ is a solution to $(\ref{1.1})$ with initial data $\| u_{0} \|_{L^{2}}^{2} < 8 \pi (m + 1)$. Then $(\ref{1.1})$ is globally well-posed and scattering in both time directions.
\end{theorem}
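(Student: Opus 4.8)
The plan is to argue by contradiction using the concentration--compactness/rigidity method of Kenig--Merle, in the form adapted to mass-critical problems. The mass-critical scaling $(\ref{1.17})$ leaves $\|u\|_{L^{2}}$ invariant, and the natural scattering norm is $\|u\|_{L^{4}_{t,x}}$, the diagonal admissible Strichartz norm in two dimensions. First I would set up the local theory: Strichartz estimates for the equivariant (magnetic) linear evolution, local well-posedness in the equivariant $L^{2}$ class, and a small-data global scattering statement asserting that there is $\eta_{0} > 0$ so that $\|u_{0}\|_{L^{2}} < \eta_{0}$ forces a global solution with $\|u\|_{L^{4}_{t,x}} \ls \|u_{0}\|_{L^{2}}$. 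Alongside this I would prove a stability/perturbation lemma: a solution of $(\ref{1.1})$ with small forcing error and $L^{2}$-data close to a given finite-$L^{4}_{t,x}$ solution remains close and inherits its scattering. The nonlocal magnetic potentials $A_{t}[u], A_{\theta}[u]$ of $(\ref{1.2})$, being quadratic and nonlocal in $u$, must be controlled in these estimates by Hardy- and Strichartz-type bounds; this is where the equivariance helps, since it fixes the origin and removes the translation and Galilean invariances.

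Next comes the construction of a minimal-mass obstruction. Define
\[
M_{c} = \sup\{\, M : \text{every equivariant solution with } \|u_{0}\|_{L^{2}}^{2} < M \text{ is global and scatters}\,\},
\]
and suppose toward a contradiction that $M_{c} < 8\pi(m+1)$. Taking a sequence of solutions with masses $\nearrow M_{c}$ and diverging $L^{4}_{t,x}$ norms, I would apply a linear profile decomposition for the equivariant Schr{\"o}dinger propagator. Because equivariance eliminates spatial translations and Galilean boosts, the only surviving noncompact symmetries are the scaling $\lambda$ and the phase $\gamma$; this is a substantial simplification over the general mass-critical NLS profile decomposition. Combining the decomposition with the stability lemma and the minimality of $M_{c}$ yields a single critical element $u_{c}$ with $\|u_{c}\|_{L^{2}}^{2} = M_{c}$, nonscattering, whose orbit is almost periodic modulo symmetries: there exist $\lambda(t) > 0$ and $\gamma(t) \in \mathbb{R}$ so that $\{\lambda(t)^{-1} e^{-i\gamma(t)} u_{c}(t, \lambda(t)^{-1}\,\cdot)\}$ is precompact in $L^{2}$. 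In particular the solution neither disperses nor concentrates prematurely.

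The rigidity step is to show that no such $u_{c}$ exists, and here the self-dual structure is decisive. By $(\ref{1.13})$ the conserved energy is nonnegative, $E[u_{c}] = \frac{1}{2}\int |\mathbf{D}_{u_{c}} u_{c}|^{2} \ge 0$, and, since $Q$ is its unique zero-energy minimizer while $\|u_{c}\|_{L^{2}}^{2} = M_{c} < \|Q\|_{L^{2}}^{2}$, the strict subthreshold mass yields a coercivity estimate bounding $\int|\partial_{r} u_{c}|^{2}$ and $\int(\frac{m + A_{\theta}[u_{c}]}{r})^{2}|u_{c}|^{2}$ by the energy. I would then exploit the pseudoconformal/virial identity for $(\ref{1.1})$, wherein the variance $V(t) = \int r^{2} |u_{c}|^{2}\,dx$ has second derivative equal to a positive multiple of the conserved energy, $V''(t) = 16\,E[u_{c}] \ge 0$, the magnetic cross-terms organizing themselves, via the integration by parts underlying $(\ref{1.16})$, into the self-dual energy. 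Feeding this convexity and the precompactness of the orbit into a truncated virial / interaction-Morawetz argument controls the scaling parameter $\lambda(t)$ and forces a quantitative dispersive decay that contradicts both the nonscattering of $u_{c}$ and the uniform-in-time lower bound supplied by almost periodicity. Hence $M_{c} \ge 8\pi(m+1)$, which is the theorem.

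I expect the rigidity step to be the principal obstacle, for two related reasons. First, the magnetic potentials are nonlocal in $u$, so both the virial computation and the Morawetz estimate generate terms beyond the NLS case; these must be shown either to carry a favorable sign or to be absorbed by the nonnegative self-dual energy, and tracking their precise form through the $(\ref{1.16})$-type integrations by parts is delicate. Second, establishing genuine precompactness of the orbit, ruling out a slow leak of mass toward spatial infinity or into a self-similar regime, requires combining the equivariant constraint with the strict subthreshold coercivity; it is exactly the gap $\|Q\|_{L^{2}}^{2} - M_{c} > 0$ that provides the quantitative room to close the contradiction, while the self-dual positivity of $E$ substitutes for the defocusing sign used in the classical mass-critical theory.
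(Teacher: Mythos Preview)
The paper does not actually prove this theorem; it is quoted as the main result of \cite{liu2016global} and used as a black box. So there is no ``paper's own proof'' to compare against, only the argument in \cite{liu2016global}.

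Your outline is essentially the Kenig--Merle concentration--compactness/rigidity scheme, and this is indeed the route taken in \cite{liu2016global}: small-data scattering and a stability lemma for $(\ref{1.1})$, a linear profile decomposition (with only scaling and time translation as noncompact parameters, thanks to equivariance), extraction of a minimal-mass almost periodic critical element, and a rigidity step. One refinement worth noting: in \cite{liu2016global} the rigidity is not carried out by a direct virial argument on a merely $L^{2}$ critical element. Instead, the almost periodic solution is first reduced (as in Theorem~\ref{t2.2} of the present paper) to one of the three standard $N(t)$ scenarios, and then an \emph{additional regularity} argument (Theorems~5.3 and~5.13 of \cite{liu2016global}, restated here as Theorem~\ref{t2.3}) upgrades the solution to $H^{s}$ for all $s>0$. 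Only after this does the self-dual energy identity $(\ref{1.13})$ and a localized virial/Morawetz computation force $E[u_{c}]=0$, hence $u_{c}=Q$ up to symmetries, contradicting $\|u_{c}\|_{L^{2}}<\|Q\|_{L^{2}}$. Your sketch jumps to the virial identity assuming finite variance and $H^{1}$ control; the intermediate regularity step is where most of the technical work in \cite{liu2016global} resides, and is what makes the nonlocal magnetic terms in the virial computation tractable.
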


In fact, \cite{liu2016global} proved more. For the $m$-equivariant, defocusing Chern--Simons--Schr{\"o}dinger equation, global well--posedness and scattering hold for any data in $L^{2}$. Furthermore, when $m < 0$, the same argument implies that global well-posedness and scattering hold for $(\ref{1.1})$ with initial data in $L^{2}$.\medskip

When $\| u_{0} \|_{L^{2}}^{2} = 8 \pi (m + 1)$, \cite{li2022threshold} proved a rigidity result for $m$-equivariant initial data with finite energy. Let $H_{m}^{1}(\mathbb{R}^{2})$ denote the space of $m$-equivariant functions in $H^{1}(\mathbb{R}^{2})$.
\begin{theorem}[Characterization of the threshold solution in the self-dual case]\label{t1.4}
For $m \geq 1$, $\phi_{0} \in H_{m}^{1}(\mathbb{R}^{2})$, $\| \phi_{0} \|_{L^{2}}^{2} = 8 \pi (m + 1)$, one of the following three scenarios happens:

$(1)$ $u$ is equal to the pseudoconformal transformation of the ground state up to phase rotation and scaling.

$(2)$ $u$ is equal to the ground state up to phase rotation and scaling.

$(3)$ $u$ scatters both forward and backward in time.
\end{theorem}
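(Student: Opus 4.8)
The plan is to reduce the trichotomy to a dichotomy governed by the value of the conserved, nonnegative energy $E[u]$, exploiting the self-dual factorization $(\ref{1.13})$. Since $E[u] = \frac{1}{2}\int |\mathbf{D}_{u} u|^{2} \geq 0$, with equality precisely when $\mathbf{D}_{u} u \equiv 0$, the threshold hypothesis $\|\phi_{0}\|_{L^{2}}^{2} = 8\pi(m+1) = \|Q\|_{L^{2}}^{2}$ places $u$ in the borderline regime where the subthreshold scattering result (Theorem~\ref{t1.3}) no longer applies directly. First I would show that $E[u] > 0$ forces scattering in both time directions, which is scenario~(3); then I would show that $E[u] = 0$ forces $u$ to lie, at each time, in the scaling--phase orbit of $Q$, after which a modulation analysis separates the static soliton, scenario~(2), from its pseudoconformal transform, scenario~(1). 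Because $E[u] \geq 0$ always, these two cases are exhaustive.

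For the zero-energy case, conservation of energy and the self-dual identity give $\mathbf{D}_{u} u(t,\cdot) \equiv 0$ for every $t$, so that $u(t,r) = e^{i\gamma(t)}\lambda(t)\, Q(\lambda(t) r)$ for some $C^{1}$ parameters $\lambda(t) > 0$ and $\gamma(t)$; this invokes the classification from Appendix A of \cite{li2022threshold} that $Q$ is the unique finite-energy solution of $E[u] = 0$ up to the scaling $(\ref{1.17})$ and phase $(\ref{1.18})$ symmetries. Substituting this ansatz into $(\ref{1.1})$ and projecting onto the modulation directions should yield a closed pair of ordinary differential equations for $\lambda$ and $\gamma$. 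I expect these to integrate explicitly: the constant-scale branch $\lambda \equiv \lambda_{0}$ produces the static Jackiw--Pi vortex up to symmetry, giving scenario~(2), while the remaining solutions have $\lambda(t)$ diverging in finite time at the self-similar rate dictated by the pseudoconformal symmetry, which one recognizes as the pseudoconformal transform of $Q$, giving scenario~(1). The distinction between (1) and (2) is then read off from whether the maximal interval $I$ has a finite endpoint or is a half-line.

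For the positive-energy case, suppose toward a contradiction that $u$ fails to scatter forward in time. Since every solution of mass strictly below $8\pi(m+1)$ scatters by Theorem~\ref{t1.3}, a non-scattering solution at exactly the threshold mass is a minimal-mass obstruction, and the linear/nonlinear profile decomposition adapted to $(\ref{1.1})$, in the spirit of \cite{liu2016global}, admits only a single nontrivial profile: any mass splitting would produce a strictly subthreshold piece that scatters. Hence $u$ is almost periodic modulo the scaling and phase symmetries, meaning there exist $\lambda(t) > 0$ and $\gamma(t)$ such that $\{\, e^{i\gamma(t)}\lambda(t)\, u(t, \lambda(t)\,\cdot\,) \,\}$ is precompact in $L^{2}$. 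I would then invoke the virial (pseudoconformal) identity for $(\ref{1.1})$, namely $\tfrac{d^{2}}{dt^{2}}\int r^{2} |u|^{2}\, dx = c\, E[u]$ for a positive constant $c$, so that the variance is a convex parabola opening at a strictly positive rate and grows quadratically as $t \to \infty$. A localized version of this identity, combined with the precompactness of the rescaled orbit and conservation of mass, is incompatible with such quadratic spreading unless $E[u] = 0$; this contradiction rules out positive-energy non-scattering solutions, and the same argument run backward in time via the time-reversal symmetry yields scattering in both directions, i.e. scenario~(3).

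The main obstacle is the positive-energy step, specifically the construction of the compact critical element and the control of the virial identity in the presence of the nonlocal connection terms $A_{t}[u]$ and $A_{\theta}[u]$ from $(\ref{1.2})$. Unlike the pure-power mass-critical Schr\"odinger equation, here the profile decomposition and the stability/perturbation theory must track the gauge fields, whose Hartree-type structure interacts nontrivially with the scaling symmetry; verifying that a single profile survives and that the rescaled orbit is genuinely $L^{2}$-precompact requires care with the long-range behavior of $A_{\theta}[u]$. Likewise, justifying the localized virial computation demands estimating the boundary and error terms generated by $(\frac{m + A_{\theta}[u]}{r})^{2} u$, which I expect to be the most delicate part of the argument. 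By contrast, the finite-time blowup classification in the zero-energy case is comparatively soft, since the rigidity of $\mathbf{D}_{u} u \equiv 0$ collapses the dynamics onto the two explicit modulation solutions.
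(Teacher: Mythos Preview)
First, a framing remark: in this paper Theorem~\ref{t1.4} is quoted from \cite{li2022threshold} and is not proved here, so there is no in-paper argument to compare against. That said, your proposed strategy has a genuine structural error that would prevent it from closing.

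The error is in the placement of scenario~(1) inside your energy dichotomy. The pseudoconformal blowup $S(t,r)=\frac{1}{|t|}Q(r/|t|)e^{-ir^{2}/(4|t|)}$ does \emph{not} have zero energy. A direct computation using $\mathbf{D}_{Q}Q=0$ gives $\mathbf{D}_{S}S=-\tfrac{ir}{2|t|^{2}}Q(r/|t|)e^{-ir^{2}/(4|t|)}$, hence
\[
E[S]=\frac{1}{2}\int|\mathbf{D}_{S}S|^{2}\,dx=\frac{1}{8}\int |x|^{2}|Q|^{2}\,dx>0.
\]
Consequently your zero-energy branch cannot produce scenario~(1): if $E[u]=0$ then $u(t)=e^{i\gamma(t)}\lambda(t)Q(\lambda(t)\cdot)$ at every time, and substituting into $(\ref{1.1})$ forces $\dot\lambda=\dot\gamma=0$ (the spatial operator annihilates this ansatz exactly because $Q$ is static, leaving $i\partial_{t}u=0$). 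So $E[u]=0$ yields only the static soliton, scenario~(2). There is no ``diverging $\lambda(t)$'' branch here; the quadratic phase in $S$ takes it out of the scaling--phase orbit of $Q$.

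This in turn breaks your positive-energy step: $S$ is a threshold-mass, positive-energy solution that blows up forward in time, so the claim ``$E[u]>0$ implies scattering in both directions'' is simply false. Your virial/compactness contradiction cannot go through for $S$ because $S$ is \emph{not} almost periodic modulo scaling on its full lifespan: the rescaled family $|t|\,S(t,|t|\cdot)=Q(r)e^{-i|t|r^{2}/4}$ fails to be $L^{2}$-precompact as $t\to-\infty$ (indeed $S$ scatters backward, since $\|S\|_{L^{4}_{t,x}((-\infty,-1)\times\mathbb{R}^{2})}<\infty$). The actual trichotomy therefore lives entirely inside the $E[u]>0$ case once the static soliton is peeled off, and the hard part of \cite{li2022threshold} is precisely to show that a positive-energy, threshold-mass, finite-energy solution that fails to scatter must coincide with $S$ up to symmetries. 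Your sketch does not address this identification.
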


As in the case of the mass-critical problem, $(\ref{1.1})$ has the pseudoconformal symmetry. Acting on $Q$, the pseudoconformal symmetry gives the blowup solution
\begin{equation}\label{1.19}
S(t, r) = \frac{1}{|t|} Q(\frac{r}{|t|}) e^{-i \frac{r^{2}}{4 |t|}}, \qquad t < 0.
\end{equation}
\begin{remark}
Compare Theorem $\ref{t1.4}$ to \cite{merle1993determination} and \cite{merle1992uniqueness} for the mass--critical nonlinear Schr{\"o}dinger equation.
\end{remark}

Observe that Theorem $\ref{t1.4}$ implies that if $u$ fails to scatter, then $(\ref{1.8})$ holds as $t_{n} \nearrow 0$ for any such sequence. The analogous result for data $\notin H_{m}^{1}$ remains open.\medskip

For data above the ground state that lies in a weighted Sobolev space, \cite{kim2022soliton} proved that the soliton resolution conjecture holds.
\begin{theorem}[Soliton resolution for equivariant $H^{1,1}$ data]\label{t1.4.1}
If $u$ is an $H_{m}^{1}$ solution to $(\ref{1.1})$ that blows up forwards in time at $T < +\infty$, then $u(t)$ admits the decomposition
\begin{equation}\label{1.20}
u(t, \cdot) - \lambda(t) e^{i \gamma(t)} Q(\lambda(t) \cdot) \rightarrow z^{\ast} \in L^{2}, \qquad \text{as} \qquad t \nearrow T,
\end{equation}
for some function $\lambda(t) : [0, T) \rightarrow (0, \infty)$ and $\gamma(t) : [0, T) \rightarrow \mathbb{R}$.\medskip

If $u$ is a $H_{m}^{1, 1}$ solution to $(\ref{1.1})$ that exists globally forwards in time, then either $u(t)$ scatters forward in time, or $u(t)$ admits the decomposition
\begin{equation}\label{1.21}
u(t, \cdot) - \lambda(t) e^{i \gamma(t)} Q(\lambda(t) \cdot) - e^{it \Delta^{(-m - 2)}} u^{\ast} \rightarrow 0, \qquad \text{in} \qquad L^{2}, \qquad \text{as} \qquad t \rightarrow +\infty,
\end{equation}
for some $u^{\ast} \in L^{2}$.
\end{theorem}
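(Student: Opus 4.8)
The plan is to combine the self-dual (Bogomol'nyi) structure of the energy with a modulation analysis around the soliton and a virial-type monotonicity formula made available by the weighted $H^{1,1}$ hypothesis. The starting point is the factorization $(\ref{1.13})$--$(\ref{1.14})$: since $E[u] = \frac{1}{2}\|\mathbf{D}_{u} u\|_{L^{2}}^{2} \geq 0$, with equality exactly on the family $\{\lambda e^{i\gamma} Q(\lambda \cdot)\}$, the energy itself measures the deviation of $u$ from the soliton manifold. First I would record the pseudoconformal correspondence underlying $(\ref{1.19})$, schematically $u(t,r) \mapsto |t|^{-1} u(-t^{-1}, r/t)\, e^{i r^{2}/(4t)}$, which maps a finite-time blowup solution at $T$ to a global-in-time solution and conjugates the static profile $Q$ with the explicit blowup profile $S$. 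This lets me treat the two halves of the theorem as one problem: prove the global statement $(\ref{1.21})$ and then pull it back through the pseudoconformal map to obtain the blowup statement $(\ref{1.20})$, under which the radiation $e^{it\Delta^{(-m-2)}}u^{\ast}$ transforms into a fixed strong limit $z^{\ast}$.

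Second I would set up the modulation decomposition. Using the scaling symmetry $(\ref{1.17})$ and the phase symmetry $(\ref{1.18})$, write
$$u(t,\cdot) = \lambda(t)\, e^{i\gamma(t)}\,\bigl(Q + \epsilon(t,\cdot)\bigr)(\lambda(t)\,\cdot),$$
and determine $\lambda(t), \gamma(t)$ by imposing orthogonality of $\epsilon$ to the symmetry generators $\Lambda Q = Q + rQ'$ and $iQ$. The self-dual structure pays off here: the operator linearizing the flow around $Q$ is a perfect square of the form $\mathbf{D}^{\ast}\mathbf{D}$, hence nonnegative and, modulo its kernel (spanned precisely by the generators just removed), coercive. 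Feeding the smallness of the energy into this coercivity gives $\|\epsilon(t)\|_{\dot H^{1}} \lesssim$ (the modulation parameters) and reduces the dynamics to a finite-dimensional system of modulation ODEs for $(\lambda,\gamma)$ coupled to a quantitatively controlled remainder.

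Third, and this is the crux, I would upgrade weak control of $\epsilon$ to strong $L^{2}$ convergence of the residual. The engine is the virial/pseudoconformal identity: for $H^{1,1}$ data the variance $\int |x|^{2}|u|^{2}\,dx$ is finite and its second time derivative equals $16\,E[u] \geq 0$ by the self-dual sign, so the variance is convex; this pins down the concentration rate of $\lambda(t)$ and, crucially, forbids the mass carried by $\epsilon$ from re-concentrating into a second bubble. Ruling out a second bubble is the delicate point: in the equivariant self-dual setting it follows from the variance monotonicity together with the sign of the energy (alternatively, under the restriction $\|u\|_{L^{2}}^{2} < 2\|Q\|_{L^{2}}^{2}$ of Theorem $\ref{t1.1}$, each bubble costs a full $\|Q\|_{L^{2}}^{2}$, so at most one can form). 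A local smoothing / Kato estimate then shows that, apart from the bubble, the remaining mass either settles to a fixed profile (blowup case) or radiates as a free solution $e^{it\Delta^{(-m-2)}}u^{\ast}$ of the shifted-equivariance linear flow (global case). The main obstacle is precisely this last dichotomy: proving asymptotic orthogonality between the concentrating bubble and the radiation and identifying the radiation's limit in the strong topology, rather than merely extracting a weak subsequential limit as in Theorem $\ref{t1.1}$. This demands a quantitative incoming/outgoing decomposition of the linear propagator and careful treatment of the long-range magnetic interaction encoded in $A_{\theta}$ and $A_{t}$, which decay too slowly to be handled as a short-range potential.
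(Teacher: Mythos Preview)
The paper does not prove Theorem~\ref{t1.4.1}. It is quoted from \cite{kim2022soliton} as background in the survey of previous results; no argument is given here beyond the remark that $e^{it\Delta^{(-m-2)}}u^{\ast}\rightharpoonup 0$. So there is no ``paper's own proof'' to compare your proposal against.

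That said, a few comments on the proposal itself. The broad architecture you describe---self-dual factorization $(\ref{1.13})$, modulation around $Q$ with orthogonality to $\Lambda Q$ and $iQ$, coercivity of the linearized operator coming from its factored form, and a virial/Morawetz monotonicity exploiting the $H^{1,1}$ hypothesis---is indeed the scheme behind \cite{kim2022soliton}. Two points, however, would not go through as written. First, the pseudoconformal reduction you invoke to ``treat the two halves of the theorem as one problem'' is not available in general: the finite-time blowup statement assumes only $H_{m}^{1}$ data, and the pseudoconformal transform requires the weight $|x|u\in L^{2}$, i.e.\ $H^{1,1}$. The two cases genuinely need separate arguments in \cite{kim2022soliton}. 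Second, the no-bubbling step is not a consequence of variance convexity or of the mass restriction $\|u\|_{L^{2}}^{2}<2\|Q\|_{L^{2}}^{2}$: Theorem~\ref{t1.4.1} carries no upper bound on the mass, so appealing to ``each bubble costs $\|Q\|_{L^{2}}^{2}$'' does not exclude multiple bubbles. In \cite{kim2022soliton} this is handled by a nonlinear coercivity estimate specific to the self-dual energy, which forces proximity to a \emph{single} soliton whenever $E[u]$ is small relative to the kinetic energy; your sketch does not supply a substitute for that step.
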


\begin{remark}
The operator $e^{it \Delta^{(-m - 2)}}$ denotes the linear evolution of a $(-m - 2)$ equivariance index Chern--Simons--Schr{\"o}dinger operator. For our purposes, it is enough to know that
\begin{equation}\label{1.22}
e^{it \Delta^{(-m - 2)}} u^{\ast} \rightharpoonup 0.
\end{equation}
\end{remark}

The papers of \cite{kim2020blow}, \cite{kim2020construction}, and \cite{kim2019pseudoconformal} construct blowup solutions for a class of data with mass slightly higher than the mass of the ground state. In each case, the blowup solutions satisfy $(\ref{1.8})$.\medskip

See also the result of \cite{dodson2023liouville} for a rigidity result for data above the ground state, namely that
\begin{theorem}\label{t1.5}
Suppose $u_{0} \in H_{m}^{1}$ is an initial data for $(\ref{1.1})$ that has a solution on the maximal interval of existence $I$. Furthermore, suppose that $I = (-\infty, t_{0})$, where $t_{0}$ could be $+\infty$, or $(t_{0}, \infty)$, where $t_{0}$ could be $-\infty$. Also, suppose that for any $\eta > 0$, there exists $R(\eta) < \infty$ such that
\begin{equation}\label{1.23}
\sup_{t \in I} \int_{|x| \geq R(\eta)} |u(t, x)|^{2} dx < \eta,
\end{equation}
where $I$ is the interval of existence for a solution to $(\ref{1.1})$. Then $u$ is the soliton solution up to the symmetries $(\ref{1.17})$ and $(\ref{1.18})$.
\end{theorem}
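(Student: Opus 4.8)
The plan is to reduce the theorem to the single statement that the conserved energy vanishes, $E[u]=0$, and then to invoke the self-dual characterization recorded after $(\ref{1.17})$ — that $Q$ is the unique solution of $E[u]=0$ modulo the scaling $(\ref{1.17})$ and phase $(\ref{1.18})$ symmetries — to conclude. The driving mechanism is the virial (variance) identity for $(\ref{1.1})$. Writing $V(t)=\int |x|^{2}|u(t,x)|^{2}\,dx$, the mass-critical self-dual structure should produce an identity of the form
\begin{equation}\label{p1}
V''(t)=c\,E[u],\qquad c>0,
\end{equation}
with no additional mass term, exactly as for the mass-critical nonlinear Schr\"odinger equation. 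Deriving $(\ref{p1})$ is the first task: one differentiates $V$ twice, using that the charge current for $(\ref{1.1})$ is $\mathrm{Im}(\bar u\,\mathbf D_{u}u)$ with $\mathbf D_{u}$ as in $(\ref{1.14})$, and checks that the connection terms $A_{\theta}[u]$ and $A_{t}[u]$ of $(\ref{1.2})$ assemble, via the integration by parts already carried out in $(\ref{1.16})$, into the self-dual energy $(\ref{1.13})$. Since $E[u]\ge 0$ by $(\ref{1.13})$ and $E[u]$ is conserved, $(\ref{p1})$ makes $V$ convex, and the entire content of the theorem is that the compactness hypothesis forces the convexity constant $cE[u]$ to vanish.

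To exploit $(\ref{p1})$ against $(\ref{1.23})$ I work with a truncated variance. Fix a radial cutoff with $\chi_{R}(x)=|x|^{2}$ for $|x|\le R$ and $\chi_{R}\equiv (2R)^{2}$ for $|x|\ge 2R$, and set $V_{R}(t)=\int \chi_{R}(x)|u(t,x)|^{2}\,dx$, so that $0\le V_{R}(t)\le 4R^{2}M[u]$ is bounded uniformly in $t$ by the conserved mass, and likewise $|V_{R}'(t)|\le C_{R}$ is bounded by mass and energy. Repeating the computation of $(\ref{p1})$ with $\chi_{R}$ in place of $|x|^{2}$ gives
\begin{equation}\label{p2}
V_{R}''(t)=c\,E[u]+\mathcal E_{R}(t),
\end{equation}
where the error $\mathcal E_{R}(t)$ is supported in the annulus $R\le|x|\le 2R$. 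Granting for the moment that $\mathcal E_{R}$ is negligible for large $R$ — uniformly, or on time average — the conclusion is immediate: integrating $(\ref{p2})$ over $[T,2T]$ and using the boundedness of $V_{R}'$ yields
\begin{equation}\label{p3}
c\,E[u]\,T\le C_{R}+\int_{T}^{2T}|\mathcal E_{R}(t)|\,dt,
\end{equation}
so dividing by $T$ and letting $T\to\infty$ forces $cE[u]\le \limsup_{T\to\infty}\tfrac1T\int_{T}^{2T}|\mathcal E_{R}|$, whereupon letting $R\to\infty$ gives $E[u]=0$. The backward interval $I=(-\infty,t_{0})$ is handled identically by integrating over $[-2T,-T]$. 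Heuristically this is exactly the statement that a uniformly localized solution cannot have its variance grow like $\tfrac{c}{2}E[u]\,t^{2}$, the growth that positive energy would force at the infinite end of $I$ — precisely the behavior of the spreading pseudoconformal solution $(\ref{1.19})$, which lives on $(-\infty,0)$ but violates $(\ref{1.23})$.

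The main obstacle is therefore the control of $\mathcal E_{R}(t)$, that is, converting the uniform smallness of the \emph{mass} tail in $(\ref{1.23})$ into smallness of the \emph{energy} and nonlinear tails that actually appear. Schematically $\mathcal E_{R}(t)$ is a combination of $\int_{|x|\sim R}|\mathbf D_{u}u|^{2}$, of $\int_{|x|\sim R}|u|^{4}$, and of lower-order terms built from $A_{\theta}[u]$ and $A_{t}[u]$. The quartic tail is controlled by $(\ref{1.23})$ through a localized Gagliardo--Nirenberg inequality, since small tail mass together with bounded energy bounds $\int_{|x|>R}|u|^{4}$. The angular part of $|\mathbf D_{u}u|^{2}$ together with the connection terms contributes pieces of the form $R^{-2}\int_{|x|>R}|u|^{2}$ and tail integrals of $A_{\theta}[u]$, which are directly small once one notes that $A_{\theta}[u]$ saturates to the constant $-M[u]/(4\pi)$ as $r\to\infty$. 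The genuinely hard term is the radial kinetic tail $\int_{|x|>R}|\partial_{r}u|^{2}$: uniform mass localization alone does \emph{not} bound it, and obtaining its smallness — uniformly in time, or adequately on time average — is where the dynamics must enter. I expect this to require promoting $(\ref{1.23})$ to a form of $H^{1}$-tightness of the trajectory $\{u(t):t\in I\}$, leveraging conservation of energy together with a local smoothing or Morawetz-type spacetime estimate for $(\ref{1.1})$ rather than the static hypothesis by itself; this is the technical heart of the argument.

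Once $E[u]=0$ is established, the conclusion follows from self-dual rigidity. By $(\ref{1.13})$, $E[u]=0$ forces $\mathbf D_{u}u=0$, i.e. $u$ solves the first-order equation $\partial_{r}u=\frac{m+A_{\theta}[u]}{r}u$ together with the gauge conditions $(\ref{1.2})$; by Appendix A of \cite{li2022threshold} the only nontrivial such $m$-equivariant profile is the Jackiw--Pi vortex $Q$ of Definition $\ref{d1.2}$, up to the scaling $(\ref{1.17})$ and phase $(\ref{1.18})$ symmetries. Since $E$ is conserved and $Q$ is a static solution, $u(t)$ coincides with this rescaled, phase-rotated $Q$ for all $t\in I$, which is the assertion of the theorem.
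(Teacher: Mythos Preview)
The paper does not prove Theorem~\ref{t1.5}; it is quoted in the ``Outline of previous results'' subsection as a result of \cite{dodson2023liouville} and is used later only as a black box (in case~(1) of Theorem~\ref{t2.2}). There is therefore no proof here to compare your proposal against.

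On its own merits, your strategy --- truncated virial identity to force $E[u]=0$, then the self-dual rigidity from $(\ref{1.13})$ and Appendix~A of \cite{li2022threshold} --- is the correct architecture for a Liouville theorem of this type, and the existence of the pseudoconformal symmetry for $(\ref{1.1})$ confirms that the clean identity $V''(t)=cE[u]$ you posit in $(\ref{p1})$ is available. You have also correctly located the real obstruction: the error $\mathcal E_{R}(t)$ in $(\ref{p2})$ contains a kinetic tail $\int_{|x|>R}|\partial_{r}u|^{2}$ that is \emph{not} controlled by the mass-tail hypothesis $(\ref{1.23})$ alone, and your proposal explicitly leaves this unresolved (``I expect this to require\dots''). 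That is a genuine gap, not a detail: without either an $H^{1}$-compactness upgrade of the orbit $\{u(t)\}$ or a spacetime estimate that controls the time average of the exterior kinetic energy, the inequality $(\ref{p3})$ does not close. The argument in \cite{dodson2023liouville} must supply exactly this missing ingredient; your write-up is an accurate roadmap, but not yet a proof.
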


\begin{remark}
Compare Theorem $\ref{t1.1}$ to the results of \cite{fan20182}, \cite{dodson20212}, and \cite{dodson2022l2} for the mass-critical nonlinear Schr{\"o}dinger equation.
\end{remark}

\subsection{Outline of argument}
The proof of Theorem $\ref{t1.1}$ may be broken into a number of steps.\medskip

We begin with a blowup solution at the threshold, $\| u \|_{L^{2}}^{2} = 8 \pi (m + 1)$. Such a solution need not have finite energy. Using \cite{liu2016global}, a solution to $(\ref{1.1})$ with mass equal to $8 \pi (m + 1)$ is a minimal mass blowup solution. Therefore, using the profile decomposition in \cite{liu2016global}, if $t_{n}$ converges to the blowup time, then $u(t_{n})$ converges to the initial data of a solution that blows up forward and backward in time. Therefore, $u$ is an almost periodic solution.\medskip

We then prove that if $u$ is an almost periodic blowup solution (of any mass), then it is possible to take a sequence $t_{n}$ converging to the blowup time, and such that, after rescaling, $u$ satisfies one of three scenarios:

\begin{enumerate}
\item $N(t) = 1$, $t \in \mathbb{R}$,

\item $N(t) = t^{-1/2}$, $t \in (0, \infty)$,

\item $N(t) \leq 1$, $t \in \mathbb{R}$, $\liminf_{t \rightarrow \pm \infty} N(t) = 0$.
\end{enumerate}

Following \cite{liu2016global} (and also \cite{dodson2023liouville}), we prove that the only such solution is the soliton.\medskip

Finally, suppose that $u$ is a blowup solution with mass in the interval
\begin{equation}\label{1.24}
\| Q \|_{L^{2}}^{2} < \| u \|_{L^{2}}^{2} < 2 \| Q \|_{L^{2}}^{2}.
\end{equation}
Suppose $u$ blows up forward in time. Taking the profile decomposition of \cite{liu2016global}, for a sequence $t_{n}$ converging to the blowup time, one profile must be the initial data for a solution to $(\ref{1.1})$ that blows up both forward and backward in time. Indeed, by the decoupling property, at most one profile can have mass greater than or equal to the mass of the soliton, and therefore, all other profiles must scatter forward and backward in time. Furthermore, if we solve $(\ref{1.1})$ backward in time, with initial data $u(t_{n}) = u_{0}^{(n)}$, we must have
\begin{equation}\label{1.25}
\| u^{(n)} \|_{(\inf(I), 0]} \nearrow \infty,
\end{equation}
so one profile must blowup both forward and backward in time. Minimizing over all such solutions which can be obtained via a profile decomposition of a sequence gives an almost periodic blowup solution, and therefore, after passing to a subsequence, must be $Q$ (up to symmetries $(\ref{1.17})$ and $(\ref{1.18})$).

\section{Almost periodic solutions}
We begin with almost periodic solutions.
\begin{definition}[Almost periodic solution]\label{d2.1}
A solution $u$ to $(\ref{1.1})$ with lifespan $I$ is said to be almost periodic modulo scaling if there exists a frequency scale function $N : I \rightarrow \mathbb{R}^{+}$ and a compactness modulus function $C : \mathbb{R}^{+} \rightarrow \mathbb{R}^{+}$ such that
\begin{equation}\label{2.1}
\int_{|x| \geq \frac{C(\eta)}{N(t)}} |u(t, x)|^{2} dx + \int_{|\xi| \leq C(\eta) N(t)} |\hat{u}(t, \xi)|^{2} d\xi \leq \eta,
\end{equation}
for all $t \in I$ and $\eta > 0$.
\end{definition}

\begin{remark}
This definition of almost periodicity is identical to the definition of almost periodicity for the mass--critical nonlinear Schr{\"o}dinger equation. See \cite{dodson2019defocusing} and the references therein for more information.
\end{remark}

We prove that an almost periodic solutions with certain additional properties must be the soliton.
\begin{theorem}\label{t2.2}
Suppose $u$ is an almost periodic solution to $(\ref{1.1})$ with finite, nonzero mass. Let $I$ be the maximal interval of existence. Suppose further that $N(t)$ satisfies one of three conditions:

\begin{enumerate}
\item $I = \mathbb{R}$ and $N(t) = 1$,

\item $I = \mathbb{R}$, $N(t) \leq 1$ for all $t \in \mathbb{R}$, and $\liminf_{t \rightarrow \pm \infty} N(t) = 0$,

\item $I = (0, \infty)$ and $N(t) = t^{-1/2}$.
\end{enumerate}
Then $u$ is a soliton solution. That is, there exists some $\gamma \in \mathbb{R}$ and $\lambda > 0$ such that, for all $t \in \mathbb{R}$,
\begin{equation}\label{2.2}
u(t, r) = e^{i \gamma} \lambda Q(\lambda r).
\end{equation}

\end{theorem}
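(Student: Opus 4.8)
The plan is to reduce all three scenarios to the single assertion that $E[u] = 0$, and then to identify $u$ from the classification of the zero-energy solutions quoted above. Because the energy is self-dual, $(\ref{1.13})$ gives $E[u] = \frac12 \int |\mathbf{D}_u u|^2 \ge 0$ unconditionally, so in each scenario it suffices to prove $E[u] \le 0$. Before doing this I would record the standard additional regularity and compactness properties of almost periodic solutions, transplanting the mass-critical Schr\"odinger argument (the double Duhamel trick together with interpolation) to $(\ref{1.1})$ by means of the Strichartz estimates and perturbation theory of \cite{liu2016global}: after rescaling by $N(t)$ the solution should lie in $H^s$ uniformly in $t$ for every $s$; in particular $E[u] < \infty$, $\|\partial_r u(t)\|_{L^2} \ls N(t)$ for all $t \in I$, and the rescaled orbit $\{N(t)^{-1} u(t, N(t)^{-1}\cdot) : t \in I\}$ is bounded in $H^1$ and precompact in $L^2$. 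This step --- in particular controlling the nonlocal connection coefficients $A_\theta[u]$, $A_t[u]$ in the relevant spaces, and handling the concentration of $N(t)$ as $t \to 0^+$ in scenario (3), which one sidesteps by working on $t \ge 1$ where $N(t) \le 1$ --- is the part I expect to be the main obstacle.

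Granting this, scenario (1) ($N \equiv 1$, $I = \mathbb{R}$) is handled by a truncated virial identity. One begins from $\frac{d^2}{dt^2}\int |x|^2 |u|^2\, dx = c_0\, E[u]$ for a fixed $c_0 > 0$ --- for $(\ref{1.1})$ the quartic term and the connection terms recombine into a multiple of the self-dual energy, exactly as for the mass-critical nonlinear Schr\"odinger equation --- and replaces $|x|^2$ by a smooth weight $\psi_R$ equal to $|x|^2$ on $\{|x| \le R\}$ and constant on $\{|x| \ge 2R\}$. Mass conservation and the bound $\|\partial_r u(t)\|_{L^2} \ls 1$ give $|\frac{d}{dt}\int \psi_R |u|^2| \ls R$ uniformly in $t$, while $\frac{d^2}{dt^2}\int \psi_R |u|^2 = c_0 E[u] + O_R(t)$ with $O_R(t)$ supported in $\{R \le |x| \le 2R\}$ and, by almost periodicity with $N \equiv 1$ (and boundedness of the rescaled orbit in $H^1$), satisfying $\sup_t |O_R(t)| \to 0$ as $R \to \infty$. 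Integrating over $[-T,T]$, dividing by $T$, and sending $T \to \infty$ and then $R \to \infty$ gives $c_0 E[u] \le 0$, hence $E[u] = 0$.

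For scenarios (2) and (3) no virial identity is needed. In each there is a sequence $t_n \to \sup I$ with $N(t_n) \to 0$ (for (3), take $t_n \to \infty$, so $N(t_n) = t_n^{-1/2} \to 0$ and the preceding regularity applies on $t \ge 1$). Put $v_n(r) := N(t_n)^{-1} u(t_n, N(t_n)^{-1} r)$. By the $L^2$-critical scaling of $(\ref{1.1})$ one has $E[v_n] = N(t_n)^{-2} E[u]$, whereas the additional regularity bounds $\{v_n\}$ in $H^1$, so $\sup_n E[v_n] < \infty$; letting $N(t_n) \to 0$ forces $E[u] = 0$.

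It remains to pass from $E[u] = 0$ to the soliton. Since $u(t)$ has finite energy and nonzero mass, $E[u] = 0$ forces $\mathbf{D}_{u(t)} u(t) \equiv 0$ for each $t$, and by the classification of the zero-energy solutions (Appendix A of \cite{li2022threshold}) we obtain $u(t) = e^{i\gamma_0} \lambda_0 Q(\lambda_0 \cdot)$ for some $\gamma_0 \in \mathbb{R}$, $\lambda_0 > 0$. On the other hand the time-independent field $(t,r) \mapsto e^{i\gamma_0} \lambda_0 Q(\lambda_0 r)$ solves $(\ref{1.1})$: the identity $\mathbf{D}_Q Q = 0$ makes the spatial part of $(\ref{1.1})$ annihilate it, and its $t$-derivative vanishes. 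By uniqueness of solutions, $u(t,r) = e^{i\gamma_0} \lambda_0 Q(\lambda_0 r)$ throughout $I$, which is $(\ref{2.2})$ in scenario (1). In scenarios (2) and (3) this further exhibits $u$ as a global soliton with constant frequency scale, contradicting $\liminf_{t \to \pm\infty} N(t) = 0$ in (2) and $I = (0,\infty)$ in (3); hence those two scenarios are vacuous, and the conclusion holds in every case.
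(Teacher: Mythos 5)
Your overall strategy --- reduce everything to $E[u]=0$ via non-negativity of the self-dual energy, then use the classification of zero-energy states from Appendix A of \cite{li2022threshold} plus uniqueness to conclude $u$ is the static soliton --- is sound and reaches the same conclusion, but it diverges from the paper in both halves. For case (1) the paper does not run a virial argument at all: with the $s=1$ uniform regularity of Theorem \ref{t2.3}, almost periodicity with $N\equiv 1$ gives the uniform spatial tightness $(\ref{1.23})$, and the Liouville theorem (Theorem \ref{t1.5}, from \cite{dodson2023liouville}) is quoted to identify $u$ with the soliton. Your truncated virial route is a legitimate alternative (morally it re-proves a version of that rigidity), but the step you state as ``the connection terms recombine into a multiple of the self-dual energy, exactly as for mass-critical NLS'' is exactly where the work lies: the localized identity for $(\ref{1.1})$ produces error terms coming from the nonlocal fields $A_t[u]$, $A_\theta[u]$ which are not literally supported in $\{R\le |x|\le 2R\}$, and controlling them uniformly in $t$ by mass/energy tails must be checked; the paper sidesteps this entirely by citing Theorem \ref{t1.5}. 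For cases (2) and (3) the paper shows directly that $E(u(t_n))\to 0$ along a sequence with $N(t_n)\to 0$ and invokes conservation of energy, whereas you argue by scaling, $E[v_n]=N(t_n)^{-2}E[u]$ with $E[v_n]$ bounded.

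The one place where your sketch asserts more than is available is the ``preliminary regularity'' input $\|\partial_r u(t)\|_{L^2}\lesssim N(t)$ for all $t$. In case (2) the regularity statement actually provided (Theorem \ref{t2.3}, following Theorems 5.3 and 5.13 of \cite{liu2016global}) is only $u\in L_t^\infty H_x^s$ uniformly, not the scale-invariant pointwise-in-time bound; in the cascade scenario $N(t)$ can oscillate and the pointwise bound is not the standard output of the double Duhamel argument. This is not fatal, because your argument only needs boundedness of $E[v_n]$ along the chosen sequence, and that follows from the paper's interpolation device: split at frequency $N(t_n)^{1/2}$, bound the low frequencies by $N(t_n)^{1/2}\|u\|_{L^2}$, and the high frequencies by interpolating the frequency-tightness in $(\ref{2.1})$ against the uniform $\dot H^2$ bound, giving $\|\nabla u(t_n)\|_{L^2}\to 0$ as in $(\ref{2.5})$--$(\ref{2.6})$ (in case (3) the quoted bound $(\ref{2.4})$ already is scale-invariant, so your argument applies verbatim for $t\ge 1$). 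You should also say how the Hardy-type portion $\int (\tfrac{m+A_\theta}{r})^2|v_n|^2$ of $E[v_n]$ is controlled: boundedness of $\partial_r v_n$ in $L^2$ alone does not give this in two dimensions, and the paper handles the analogous point using $u(t,0)=0$, the pointwise bound $(\ref{2.8})$, and the annulus Hardy inequality $(\ref{2.9})$ with a logarithmic loss (alternatively, for $m\ge 1$ one can use that the full equivariant $\dot H^1$ norm already contains $m^2\|v_n/r\|_{L^2}^2$). With these two repairs your proof goes through and is a genuine variant of the paper's argument for cases (2)--(3), and a different (virial-based) argument for case (1).
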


Observe that if $(\ref{2.2})$ holds, then $N(t) = 1$. Thus, a consequence of Theorem $\ref{t2.2}$ is that $(2)$ and $(3)$ cannot occur for an almost periodic solution to $(\ref{1.1})$.
\begin{proof}
The proof of Theorem $\ref{t2.2}$ utilizes the extra regularity argument in \cite{liu2016global}, see Theorems $5.3$ and $5.13$ in \cite{liu2016global}.
\begin{theorem}\label{t2.3}
In cases $(1)$ and $(2)$ in Theorem $\ref{t2.2}$, for any $s > 0$,
\begin{equation}\label{2.3}
\| u \|_{L_{t}^{\infty} H_{x}^{s}(\mathbb{R} \times \mathbb{R}^{2})} \lesssim_{s, u} 1.
\end{equation}
In case $(3)$ of Theorem $\ref{t2.2}$, for any $s > 0$, $t \in (0, \infty)$,
\begin{equation}\label{2.4}
\| u(t) \|_{\dot{H}_{x}^{s}(\mathbb{R}^{2})} \lesssim_{u, s} t^{-s/2}.
\end{equation}
\end{theorem}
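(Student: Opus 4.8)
The plan is to adapt the frequency-localized double-Duhamel bootstrap that Liu and Smith use for the analogous extra-regularity statements about minimal-mass blowup solutions (Theorems $5.3$ and $5.13$ of \cite{liu2016global}), which is in turn modeled on the regularity arguments for the mass-critical NLS. Write $(\ref{1.1})$ in the Duhamel-ready form
\begin{equation}
i \partial_{t} u = \mathcal{L}_{m} u + F(u), \qquad \mathcal{L}_{m} := -\partial_{r}^{2} - \tfrac{1}{r}\partial_{r} + \tfrac{m^{2}}{r^{2}},
\end{equation}
so that $\mathcal{L}_{m}$ is the free Schr\"odinger operator restricted to $m$-equivariant functions (a Bessel-type operator, for which the two-dimensional dispersive bound $\| e^{-it\mathcal{L}_{m}} \|_{L^{1}_{x} \to L^{\infty}_{x}} \lesssim |t|^{-1}$ and the radial Strichartz estimates are available, cf.\ \cite{liu2016global}) and $F(u) = A_{t}[u]\,u + \tfrac{2m A_{\theta}[u] + A_{\theta}[u]^{2}}{r^{2}}\,u - |u|^{2} u$. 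By Definition \ref{d2.1}, in cases $(1)$ and $(2)$ one has $N(t) \le 1$ for all $t$, so $u$ does not concentrate at high frequencies and, for every $\eta > 0$, $\sup_{t} \| P_{\ge N} u(t) \|_{L^{2}_{x}} < \eta$ once $N \ge N_{0}(\eta)$; here $P_{\ge N}$ is a Littlewood--Paley projection (a radial Fourier multiplier, hence preserving $m$-equivariance).

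First I would record the nonlinear input. Since the mass is conserved, $|A_{\theta}[u]| \lesssim \| u \|_{L^{2}_{x}}^{2} \lesssim_{u} 1$ and $|A_{t}[u]| \lesssim_{u} 1$ uniformly in $(t,r)$, and the equivariant Hardy inequality $\| f/r \|_{L^{2}_{x}} \le \tfrac{1}{m}\| \nabla f \|_{L^{2}_{x}}$ (which uses $m \ge 1$) controls the singular weight in the magnetic term. The key estimate is that, on any bounded interval $J$, each piece of $F(u)$ is bounded in the dual Strichartz space $L^{4/3}_{t,x}(J \times \mathbb{R}^{2})$ by a product of (weighted) Strichartz norms of $u$ in which at least one factor carries a high-frequency projection $P_{\ge cN}$, and is therefore small by almost periodicity. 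This is where the nonlocal gauge terms require genuine care rather than an off-the-shelf power-nonlinearity estimate: $A_{t}[u]\,u$ is effectively quintic with two extra radial integrations, while the magnetic term carries the non-decaying $1/r^{2}$ weight, so one must combine the weighted radial Strichartz estimates of \cite{liu2016global} with Hardy's inequality and the boundedness of $A_{\theta}$, $A_{t}$. I expect this bookkeeping, rather than any conceptual point, to be the main obstacle.

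With those estimates in place, the core step is the in--in Duhamel identity. Since $u$ fails to scatter on the relevant half-lines --- in case $(1)$ a nontrivial almost periodic solution with $N \equiv 1$ cannot be asymptotically free in either time direction, and in case $(2)$ the cascade $\liminf_{t \to \pm \infty} N(t) = 0$ forces the scattering norm to diverge at both ends --- the free evolutions of $u(t_{1})$ and $u(t_{2})$ tend weakly to $0$ as $t_{1} \to \inf I$ and $t_{2} \to \sup I$, so that, up to an overall unimodular constant,
\begin{equation}
\| P_{N} u(t) \|_{L^{2}_{x}}^{2} = \Big\langle \int_{\inf I}^{t} e^{-i(t - s_{1}) \mathcal{L}_{m}} P_{N} F(u(s_{1}))\, ds_{1}, \ \int_{t}^{\sup I} e^{-i(t - s_{2}) \mathcal{L}_{m}} P_{N} F(u(s_{2}))\, ds_{2} \Big\rangle .
\end{equation}
Splitting the $(s_{1},s_{2})$-integration into the region $|s_{1} - s_{2}| \le N^{-2}$, handled by Strichartz together with the nonlinear bounds above, and the region $|s_{1} - s_{2}| > N^{-2}$, handled by the $|t|^{-1}$ dispersive decay combined with Bernstein (which converts the $L^{1}_{x} \to L^{\infty}_{x}$ gain into a gain of a power of $N$), one arrives at a recursive inequality of the shape
\begin{equation}
\| P_{\ge N} u \|_{L^{\infty}_{t} L^{2}_{x}} \lesssim \eta(N)\, \| P_{\ge N/K} u \|_{L^{\infty}_{t} L^{2}_{x}} + N^{-\delta}\, \| u \|_{L^{\infty}_{t} L^{2}_{x}}, \qquad \eta(N) \to 0 .
\end{equation}
Iterating this, as in \cite{liu2016global}, yields $\| P_{\ge N} u \|_{L^{\infty}_{t} L^{2}_{x}} \lesssim_{s} N^{-s}$ for every $s > 0$, and summing over dyadic frequencies gives $(\ref{2.3})$.

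Finally, for case $(3)$, with $I = (0,\infty)$ and $N(t) = t^{-1/2}$, I would exploit the exact scale invariance of the configuration: $(\ref{2.4})$ is precisely the scale-covariant form of $(\ref{2.3})$. Concretely, the rescaled solutions $u^{(\lambda)}(t,r) := \lambda u(\lambda^{2} t, \lambda r)$ all carry the same frequency-scale function, $N_{u^{(\lambda)}}(t) = \lambda N_{u}(\lambda^{2} t) = t^{-1/2}$, and form a precompact family; running the double-Duhamel argument on a fixed reference interval (as in Theorem $5.13$ of \cite{liu2016global}) produces $\| u^{(\lambda)}(1) \|_{\dot{H}^{s}_{x}} \lesssim_{s} 1$ uniformly in $\lambda > 0$, and undoing the scaling with $\lambda = t^{1/2}$ turns this into $\| u(t) \|_{\dot{H}^{s}_{x}} \lesssim_{s} t^{-s/2}$, which is $(\ref{2.4})$.
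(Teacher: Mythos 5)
The paper offers no independent proof of this theorem: it is quoted directly from Theorems 5.3 and 5.13 of \cite{liu2016global}, and those theorems are proved by precisely the frequency-localized, in--in double-Duhamel bootstrap (short/long time splitting via Strichartz and the $|t|^{-1}$ dispersive bound, recursion in the dyadic frequency, and a rescaling reduction for the self-similar regime $N(t)=t^{-1/2}$) that you sketch, so your route coincides with the paper's, i.e.\ with the cited source's. One inaccuracy in your write-up should be repaired: the assertion that $|A_t[u]|\lesssim_u 1$ uniformly in $(t,r)$ ``since the mass is conserved'' is not correct. From $L^2$ control alone one only gets $\int_r^\infty |u|^2\,\frac{dr'}{r'}\le \frac{1}{r^2}\int_r^\infty |u|^2\,r'\,dr'\lesssim_u r^{-2}$, so $A_t[u]$ is a Hardy-type quantity that may blow up as $r\to 0$ unless one already has the $\dot H^1$-type information the theorem is meant to produce; in the actual argument the term $A_t[u]\,u$ is treated as an effectively quintic nonlinearity through (weighted, equivariant) Strichartz and Hardy estimates with a high-frequency factor extracted, not through a pointwise bound on $A_t$. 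With that caveat, and granting the gauge-term bookkeeping you correctly identify as the main technical burden (it is carried out in \cite{liu2016global}), your sketch matches the cited proof; also note that the weak vanishing of the two Duhamel tails at $\inf I$ and $\sup I$ should be justified by the standard no-waste Duhamel property of almost periodic solutions rather than only by the heuristic that the solution ``fails to scatter.''
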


Now then, taking $s = 1$ in case $(1)$, Theorem $\ref{t1.5}$ combined with $(\ref{2.1})$ implies that $u$ must be a soliton solution.

In the second case, we show that $E(u(t)) = 0$. Let $t_{n} \rightarrow \infty$ be a sequence such that $N(t_{n}) \searrow 0$ as $n \rightarrow \infty$. 
\begin{equation}\label{2.5}
\lim_{n \rightarrow \infty} \| P_{\leq N(t_{n})^{1/2}} u(t_{n}) \|_{\dot{H}^{1}} = 0.
\end{equation}
Furthermore, since $N(t_{n}) \searrow 0$, $N(t_{n})^{1/2} \geq C(\eta) N(t_{n})$ for any $\eta > 0$ if $n$ is sufficiently large. Therefore interpolating $(\ref{2.1})$ with $(\ref{2.3})$ with $s = 2$ implies
\begin{equation}\label{2.6}
\lim_{n \rightarrow \infty} \| P_{> N(t_{n})^{1/2}} u(t_{n}) \|_{\dot{H}^{1}} = 0.
\end{equation}
Then by the Sobolev embedding theorem, $(\ref{2.6})$, the fact that $u(t, 0) = 0$, Hardy's inequality, and conservation of energy, we have $E(u(t)) = 0$. Therefore, by $(\ref{1.18})$, $u$ is a soliton.
\begin{remark}
To see why we can use Hardy's inequality, observe that $(\ref{2.3})$ combined with the Sobolev embedding theorem implies that
\begin{equation}\label{2.7}
\sup_{t \in \mathbb{R}, r > 0} |\partial_{r} u(t, r)| \lesssim_{u} 1,
\end{equation}
which by the fundamental theorem of calculus and $u(t, 0) = 0$ implies
\begin{equation}\label{2.8}
|u(t, r)| \lesssim_{u} r.
\end{equation}
\end{remark}
Applying Hardy's inequality on the annulus $\{ x : \frac{1}{R} \leq |x| \leq R \}$ combined with $(\ref{2.8})$ and conservation of mass,
\begin{equation}\label{2.9}
\int \frac{1}{r^{2}} |u(t, r)|^{2} dr \lesssim \frac{1}{R^{2}} \| u \|_{L^{2}}^{2} + \ln(R) \| \nabla u(t) \|_{L^{2}}^{2}.
\end{equation}
Taking $R \nearrow \infty$ sufficiently slowly as $\| \nabla u(t_{n}) \|_{L^{2}} \searrow 0$ implies
\begin{equation}\label{2.10}
\int \frac{1}{2} (\frac{m + A_{\theta}[u]}{r})^{2} |u(t_{n})|^{2} \searrow 0,
\end{equation}
as $n \rightarrow \infty$.

By a similar computation, in case $(3)$, we have $E(u(t)) \searrow 0$ as $t \rightarrow \infty$. Once again, conservation of energy implies that $E(u(t)) = 0$, and therefore that $u(t)$ is a soliton solution. This proves Theorem $\ref{t2.2}$.
\end{proof}

Next, following section three of \cite{liu2016global}, we prove that if $u$ is an almost periodic solution, $u$ must converge to the soliton along a subsequence. Recall the local constancy property, Proposition $3.8$ from \cite{liu2016global},
\begin{proposition}\label{p2.3}
Let $u$ be a nonzero, almost periodic solution to $(\ref{1.1})$. Then there exists $\delta(u) > 0$ such that for every $t_{0} \in I$,
\begin{equation}\label{2.11}
[t_{0} - \delta N(t_{0})^{-2}, t_{0} + \delta N(t_{0})^{-2}] \subset I,
\end{equation}
and
\begin{equation}\label{2.12}
N(t) \sim_{u} N(t_{0}), \qquad \text{whenever} \qquad |t - t_{0}| \leq \delta N(t_{0})^{-2}.
\end{equation}
\end{proposition}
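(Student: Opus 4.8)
The plan is to run the standard concentration--compactness contradiction argument, relying on two inputs: the $L^{2}$ local well--posedness and long--time stability (perturbation) theory for $(\ref{1.1})$ established in \cite{liu2016global}, and the equivalence between almost periodicity modulo scaling and precompactness in $L^{2}$ of the rescaled orbit. Concretely I would record at the outset: (a) by $(\ref{2.1})$ the set $\mathcal{O} := \{ N(t)^{-1} u(t, N(t)^{-1} \cdot) : t \in I \}$ is precompact in $L^{2}(\mathbb{R}^{2})$, and by conservation of mass every element of $\mathcal{O}$ (hence of its closure) has $L^{2}$ norm $\sqrt{M[u]} > 0$; and (b) the class of almost periodic solutions is invariant under time translation and the scaling $(\ref{1.17})$ with the \emph{same} compactness modulus $C(\cdot)$: if $v(s, r) = \mu\, u(\mu^{2} s + t_{0}, \mu r)$ then $v$ solves $(\ref{1.1})$ on $\{ s : \mu^{2} s + t_{0} \in I \}$, is almost periodic, and has frequency scale $\tilde N(s) = N(\mu^{2} s + t_{0})/\mu$.

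Suppose the proposition fails. Then for each $k \ge 1$ there are $t_{k} \in I$ and $s_{k} \in \mathbb{R}$ with $|s_{k} - t_{k}| \le k^{-1} N(t_{k})^{-2}$ such that either (i) $s_{k} \notin I$, or (ii) $s_{k} \in I$ but $N(s_{k})/N(t_{k}) \notin [1/k, k]$. Rescale with $\mu = N(t_{k})^{-1}$: set $v_{k}(s, r) := N(t_{k})^{-1} u\!\left(t_{k} + N(t_{k})^{-2} s,\ N(t_{k})^{-1} r\right)$, a solution of $(\ref{1.1})$ on $I_{k} := \{ (t - t_{k}) N(t_{k})^{2} : t \in I \} \ni 0$ with $\tilde N_{k}(0) = 1$ and $v_{k}(0) \in \mathcal{O}$, and put $\sigma_{k} := (s_{k} - t_{k}) N(t_{k})^{2} \in [-1/k, 1/k]$, so $\sigma_{k} \to 0$. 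After passing to a subsequence, precompactness of $\mathcal{O}$ gives $v_{k}(0) \to \phi$ in $L^{2}$ with $\| \phi \|_{L^{2}}^{2} = M[u] > 0$. Let $w$ solve $(\ref{1.1})$ with datum $\phi$; by $L^{2}$ local well--posedness there is $\delta_{1} > 0$ with $[-\delta_{1}, \delta_{1}]$ in the lifespan of $w$ and $\| w \|_{L^{4}_{t,x}([-\delta_{1}, \delta_{1}] \times \mathbb{R}^{2})} < \infty$. By the stability theory, for all large $k$ the solution $v_{k}$ exists on $[-\delta_{1}, \delta_{1}]$ and $v_{k} \to w$ in $C([-\delta_{1}, \delta_{1}]; L^{2})$. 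Undoing the rescaling, $[t_{k} - \delta_{1} N(t_{k})^{-2}, t_{k} + \delta_{1} N(t_{k})^{-2}] \subset I$ for large $k$, which contradicts (i) since $|\sigma_{k}| \le 1/k < \delta_{1}$ eventually; and since $\sigma_{k} \to 0$ with $w$ continuous in $L^{2}$ and $v_{k} \to w$ uniformly, we also get $v_{k}(\sigma_{k}) \to w(0) = \phi$ in $L^{2}$.

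It remains to rule out (ii). Write $n_{k} := \tilde N_{k}(\sigma_{k}) = N(s_{k})/N(t_{k})$; the hypothesis forces $n_{k} \notin [1/k, k]$, so along a further subsequence $n_{k} \to 0$ or $n_{k} \to \infty$. Unwinding the scalings, $v_{k}(\sigma_{k}, \cdot) = n_{k}\, g_{k}(n_{k} \cdot)$ where $g_{k} := N(s_{k})^{-1} u(s_{k}, N(s_{k})^{-1} \cdot) \in \mathcal{O}$; pass to a subsequence with $g_{k} \to g_{\infty}$ in $L^{2}$. Since $\| n_{k} g_{k}(n_{k} \cdot) - n_{k} g_{\infty}(n_{k} \cdot) \|_{L^{2}} = \| g_{k} - g_{\infty} \|_{L^{2}} \to 0$, the limit obtained in the previous paragraph becomes $n_{k} g_{\infty}(n_{k} \cdot) \to \phi$ in $L^{2}$. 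But $\| n_{k} g_{\infty}(n_{k} \cdot) \|_{L^{2}} = \| g_{\infty} \|_{L^{2}}$ is fixed while, as $n_{k} \to 0$ or $n_{k} \to \infty$, the $L^{2}$--normalized dilates $n_{k} g_{\infty}(n_{k} \cdot)$ spread out (respectively concentrate at the origin) and hence converge weakly to $0$ in $L^{2}$. Therefore $\phi = 0$, contradicting $\| \phi \|_{L^{2}}^{2} = M[u] > 0$. This proves the proposition.

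The main obstacle is not in the soft compactness bookkeeping above but in the analytic ingredient it takes for granted, namely the $L^{2}$ well--posedness and long--time stability theory for $(\ref{1.1})$ — in particular, continuity of the data--to--solution map into $C_{t} L^{2}_{x} \cap L^{4}_{t,x}$ on any interval where the Strichartz norm is finite — which must accommodate the nonlocal gauge terms $A_{t}[u]$ and $A_{\theta}[u]$. Granting that (it is exactly the theory developed in \cite{liu2016global}), the remaining argument is the routine extraction of limits and the scaling manipulations carried out above.
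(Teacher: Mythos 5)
Your proof is correct: the rescale-and-compactness contradiction argument (extract a strong $L^{2}$ limit $\phi \neq 0$ of $N(t_{k})^{-1}u(t_{k}, N(t_{k})^{-1}\cdot)$, use local well-posedness plus stability to extend the rescaled solutions to a fixed time interval $[-\delta_{1},\delta_{1}]$, and rule out degeneration of $N(s_{k})/N(t_{k})$ because $L^{2}$-normalized dilates with degenerating scale converge weakly to zero) is exactly the standard proof of local constancy. The paper itself offers no proof, quoting the statement as Proposition $3.8$ of \cite{liu2016global}, and that reference's argument is the same as yours, with the equivariant $L^{2}$ well-posedness and stability theory you explicitly black-box being precisely what \cite{liu2016global} provides.
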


For any $T > 0$, define the quantities,
\begin{equation}\label{2.13}
osc(T) = \inf_{t_{0} \in I} \frac{\sup_{t \in I, |t - t_{0}| \leq T N(t_{0})^{-2}} N(t)}{\inf_{t \in I, |t - t_{0}| \leq T N(t_{0})^{-2}} N(t)},
\end{equation}
and
\begin{equation}\label{2.14}
a(t_{0}) = \frac{\inf_{t \in I, t \leq t_{0}} N(t) + \inf_{t \in I, t \geq t_{0}} N(t)}{N(t_{0})}.
\end{equation}

If $\lim_{T \rightarrow \infty} osc(T) < \infty$, then $(\ref{2.11})$ and $(\ref{2.12})$ imply that there exists a sequence of times $t_{n} \in I$, $T_{n} \rightarrow \infty$, such that
\begin{equation}\label{2.15}
[t_{n} - T_{n} N(t_{n})^{-2}, t_{n} + T_{n} N(t_{n})^{-2}] \subset I.
\end{equation}
By the Arzela--Ascoli theorem, $(\ref{2.1})$ implies that, possibly after passing to a subsequence, there exists some $u_{0} \in L^{2}$ such that
\begin{equation}\label{2.16}
N(t_{n})^{-1} e^{i \gamma(t_{n})} u(t_{n}, N(t_{n})^{-1} x) \rightarrow u_{0} \in L^{2}.
\end{equation}
Furthermore, $(\ref{2.15})$ implies that $u_{0}$ is the initial data to a global, almost periodic solution to $(\ref{1.1})$ that satisfies
\begin{equation}\label{2.17}
0 < \inf_{t \in \mathbb{R}} N(t) \leq \sup_{t \in \mathbb{R}} N(t) < \infty.
\end{equation}
After modifying $C(\eta)$ in $(\ref{2.1})$ by a constant, $u_{0}$ is the initial data for an almost periodic, blowup solution to $(\ref{1.1})$ that satisfies $(1)$ in Theorem $\ref{t2.2}$. Theorem $\ref{t2.2}$ also implies that $(\ref{2.16})$ holds for
\begin{equation}\label{2.18}
u_{0} = \lambda e^{i \gamma} Q(\lambda r),
\end{equation}
for some $\lambda > 0$ and $\gamma \in \mathbb{R}$. Taking $\lambda(t_{n}) = N(t_{n})^{-1}$, we are done.\medskip

Therefore, the proof of
\begin{theorem}\label{t2.4}
If $u$ is an almost periodic solution to $(\ref{1.1})$ on a maximal interval $I$, then there exists a sequence $t_{n} \in I$ and a sequence $\lambda(t_{n}) > 0$, $\gamma(t_{n}) \in \mathbb{R}$, such that
\begin{equation}\label{2.19}
\lambda(t_{n}) e^{i \gamma(t_{n})} u(t_{n}, \lambda(t_{n}) \cdot) \rightarrow Q, \qquad \text{in} \qquad L^{2}.
\end{equation}
\end{theorem}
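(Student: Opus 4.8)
The plan is to carry out the Killip--Visan-type reduction of an almost periodic solution to the three scenarios of Theorem~\ref{t2.2} and then quote that theorem. The discussion preceding the statement has already handled the case $\lim_{T \to \infty} osc(T) < \infty$, with $osc(T)$ as in $(\ref{2.13})$: one finds times $t_{n} \in I$ and widths $T_{n} \to \infty$ with $[t_{n} - T_{n} N(t_{n})^{-2}, t_{n} + T_{n} N(t_{n})^{-2}] \subset I$, rescales by $N(t_{n})^{-1}$, and applies Arzela--Ascoli to $N(t_{n})^{-1} e^{i \gamma(t_{n})} u(t_{n}, N(t_{n})^{-1} \cdot)$ using the compactness in $(\ref{2.1})$; the limit is a global almost periodic solution of nonzero mass whose frequency scale function is bounded above and below, so after a harmless rescaling and a bounded change of the modulus $C(\eta)$ it satisfies scenario $(1)$ of Theorem~\ref{t2.2}, hence equals $\lambda e^{i \gamma} Q(\lambda \cdot)$, and $(\ref{2.19})$ follows with $\lambda(t_{n}) = \lambda N(t_{n})^{-1}$. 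So it remains to treat $\lim_{T \to \infty} osc(T) = \infty$.

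I would dispatch this case with the quantity $a(t_{0})$ from $(\ref{2.14})$. If $\inf_{t_{0} \in I} a(t_{0}) = 0$, pick $t_{n}$ with $a(t_{n}) \to 0$ and extract a limit $u_{0}$ as in $(\ref{2.16})$; the condition $a(t_{n}) \to 0$, combined with the local constancy of $N$ in Proposition~\ref{p2.3} and the compactness in $(\ref{2.1})$, shows that $u_{0}$ is a global almost periodic solution of nonzero mass with frequency scale function $\leq 1$ and $\liminf_{t \to \pm \infty} N(t) = 0$, i.e.\ scenario $(2)$. If $\inf_{t_{0} \in I} a(t_{0}) > 0$, then unbounded oscillation plus this uniform lower bound forces $I$ to be a half-line and $N(t) \sim_{u} |t - t_{\ast}|^{-1/2}$, where $t_{\ast}$ is the finite endpoint; after translating $t_{\ast}$ to $0$ and replacing $N(t)$ by $t^{-1/2}$ (a bounded change of $C(\eta)$), $u$ itself satisfies scenario $(3)$. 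In either sub-case Theorem~\ref{t2.2} forces the relevant solution to be $\lambda e^{i\gamma} Q(\lambda \cdot)$ up to the symmetries $(\ref{1.17})$ and $(\ref{1.18})$, so unwinding the rescaling gives $(\ref{2.19})$. Since $Q$ has frequency scale function identically $1$, both sub-cases are in fact vacuous, so one could alternatively just observe that $\lim_{T \to \infty} osc(T) = \infty$ never occurs; but this is not needed for the statement.

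I expect the main obstacle to be the reduction in the second paragraph: verifying that $\lim_{T \to \infty} osc(T) = \infty$ really forces either a two-sided frequency cascade with $\liminf N = 0$ or an exactly self-similar rate, and in particular that the frequency scale function of the extracted solution can be normalized to be precisely $1$, $t^{-1/2}$, or $\leq 1$ with vanishing $\liminf$. This is where the definitions of $osc(T)$ and $a(t_{0})$ must be combined carefully with Proposition~\ref{p2.3} and $(\ref{2.1})$, reproducing Section~$3$ of \cite{liu2016global} (modeled in turn on the mass-critical NLS reductions of Killip--Visan and others). By comparison, the Arzela--Ascoli extraction, the check that the $L^{2}$ limit is again a solution of $(\ref{1.1})$ of the same nonzero mass and is almost periodic, and the tracking of the scaling and phase parameters are routine once the scenario is identified.
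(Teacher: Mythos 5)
Your overall architecture is the same as the paper's: the bounded-oscillation case is handled exactly as in the discussion preceding Theorem \ref{t2.4} (Arzela--Ascoli extraction at times $t_{n}$ with long rescaled lifespans, reduction to scenario $(1)$, then Theorem \ref{t2.2}), and the unbounded-oscillation case is split according to whether $\inf_{t_{0} \in I} a(t_{0})$ vanishes, which is precisely the content of Lemma \ref{l2.5}.

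There is, however, a genuine gap in your treatment of the sub-case $\inf_{t_{0} \in I} a(t_{0}) > 0$. You assert that unbounded oscillation together with this uniform lower bound forces $I$ to be a half-line with $N(t) \sim_{u} |t - t_{\ast}|^{-1/2}$, so that $u$ \emph{itself} realizes scenario $(3)$ after renormalization. No argument is given for this, and it is strictly stronger than what the Killip--Tao--Visan reduction (or Section $3$ of \cite{liu2016global}) provides: the quantities $osc(T)$ and $a(t_{0})$ only control the ratio of $N$ over windows of length $\sim T N(t_{0})^{-2}$ and the relative size of the one-sided infima, and the standard argument does not conclude that the original solution is self-similar; rather it selects a suitable sequence of times, rescales by $N(t_{n})$, and passes to a limit, and it is only this limiting almost periodic solution that can be normalized to have $N(t) = t^{-1/2}$ on $(0, \infty)$. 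This is exactly what the paper does in $(\ref{2.25})$, citing \cite{killip2009cubic}. The gap is repairable without changing your outline---perform the extraction, apply Theorem \ref{t2.2} to the limit to see that scenario $(3)$ cannot occur, hence $osc(T)$ stays bounded and the first paragraph applies---but as written the claimed mechanism is not justified. A smaller caveat of the same kind applies to your cascade sub-case: to guarantee that the limit has $N \leq 1$ with $\liminf_{t \rightarrow \pm\infty} N(t) = 0$, one should rescale at an intermediate time $t_{n}'$ where $N$ is essentially maximal on $(t_{n}^{-}, t_{n}^{+})$, as in $(\ref{2.22})$, rather than at $t_{n}$ itself.
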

reduces to proving
\begin{lemma}\label{l2.5}
There does not exist an almost periodic solution to $(\ref{1.1})$ that satisfies
\begin{equation}\label{2.20}
\lim_{T \rightarrow \infty} osc(T) = \infty.
\end{equation}
\end{lemma}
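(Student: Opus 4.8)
The plan is to argue by contradiction, following the ``three enemies'' reduction from mass-critical theory carried out in Section~3 of \cite{liu2016global}. Suppose $u$ is an almost periodic solution on a maximal interval $I$ with $\lim_{T\to\infty} osc(T) = \infty$. From $u$ I will produce, after rescaling and passing to an $L^{2}$ limit, a new almost periodic solution of finite nonzero mass falling under case $(2)$ or case $(3)$ of Theorem~\ref{t2.2}. Since Theorem~\ref{t2.2} forces any such solution to equal $\lambda e^{i\gamma} Q(\lambda \cdot)$ --- hence to have a constant frequency scale function --- this contradicts $\liminf_{t\to\pm\infty} N = 0$ in case $(2)$ and $N(t) = t^{-1/2}$ in case $(3)$, so no such $u$ can exist.

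First I would promote the divergence of $osc(T)$ to genuine oscillation of $N(\cdot)$ itself: by Proposition~\ref{p2.3} one can choose $t_n \in I$ and $T_n \to \infty$ with $[t_n - T_n N(t_n)^{-2}, t_n + T_n N(t_n)^{-2}] \subset I$ on which $\sup N / \inf N \to \infty$. Then I split on the quantity $a(t_0)$ of $(\ref{2.14})$. If $\inf_{t_0 \in I} a(t_0) = 0$, pick $t_n$ with $a(t_n) \to 0$, so there are times $s_n^- \le t_n \le s_n^+$ with $N(s_n^{\pm})/N(t_n) \to 0$; rescaling around a point $\tau_n \in [s_n^-, s_n^+]$ where $N$ is maximal on that interval, the resulting frequency function is $\le 1$ on a time interval that exhausts $\mathbb{R}$ (by Proposition~\ref{p2.3}, since $N$ is small near the endpoints) and tends to $0$ at its ends. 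Now $(\ref{2.1})$ together with Arzela--Ascoli, exactly as in $(\ref{2.16})$, yields along a subsequence an $L^{2}$ limit which is the data of a global almost periodic solution with $N \le 1$ and $\liminf_{t\to\pm\infty} N(t) = 0$; that is, case $(2)$. If instead $\inf_{t_0 \in I} a(t_0) > 0$, then on each side of every time the infimum of $N$ is comparable to $N$ at that time, which together with the oscillation of $N$ forces $I$ to be a half-line and $N$ to be monotone with $N(t) \sim |t - t_\ast|^{-1/2}$; translating $t_\ast$ to the origin and rescaling gives case $(3)$. In both branches one checks the limiting solution has mass equal to $M[u] > 0$ --- this uses the uniform-in-$n$ tightness $(\ref{2.1})$, which prevents mass escaping to spatial or frequency infinity --- and that it genuinely solves $(\ref{1.1})$ and inherits almost periodicity, which follows from the local well-posedness and stability theory underlying \cite{liu2016global}.

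Applying Theorem~\ref{t2.2} to the limiting solution then gives the contradiction announced in the first paragraph. The main obstacle is the middle step: converting the single scalar hypothesis $osc(T) \to \infty$, refined by the value of $\inf_{t_0} a(t_0)$, into the precise structural statements about the lifespan and the monotonicity/decay of $N$ for the rescaled limits, so that Theorem~\ref{t2.2} actually applies; in particular, verifying that the rescaled lifespans exhaust $\mathbb{R}$ (resp.\ the correct half-line), which is where Proposition~\ref{p2.3} and the local theory of \cite{liu2016global} do the work, and the combinatorics of which reproduce the three-enemies dichotomy for mass-critical NLS.
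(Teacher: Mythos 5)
Your proposal follows essentially the same route as the paper: the same dichotomy on $\inf_{t_0} a(t_0)$, the same Arzela--Ascoli rescaling limit producing a global almost periodic solution of type (2) when that infimum vanishes, and a reduction to the self-similar scenario (3) when it does not, with both scenarios then ruled out by Theorem \ref{t2.2}. The only difference is cosmetic: where you assert that the original solution's $N$ is forced to be monotone with $N(t) \sim |t-t_\ast|^{-1/2}$, the paper (following \cite{killip2009cubic}) instead extracts an exactly self-similar solution by a further limiting and rescaling argument, but both treatments defer the details of this branch to the same mass-critical NLS machinery.
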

\begin{proof}[Proof of Lemma $\ref{l2.5}$]
First suppose $\lim_{T \rightarrow \infty} osc(T) = \infty$ and that $\inf_{t_{0} \in I} a(t_{0}) = 0$. In this case, there exists a sequence $t_{n} \in I$ and $t_{n}^{-} < t_{n} < t_{n}^{+}$ such that
\begin{equation}\label{2.21}
\frac{N(t_{n}^{-})}{N(t_{n})} \rightarrow 0, \qquad \text{and} \qquad \frac{N(t_{n}^{+})}{N(t_{n})} \rightarrow 0.
\end{equation}
Now choose $t_{n}'$ satisfying $t_{n}^{-} < t_{n}' < t_{n}^{+}$ and such that
\begin{equation}\label{2.22}
N(t_{n}') \sim \sup_{t_{n}^{-} < t < t_{n}^{+}} N(t).
\end{equation}
Again, by the Arzela--Ascoli theorem, possibly after passing to a subsequence,
\begin{equation}\label{2.23}
N(t_{n}')^{-1} e^{i \gamma(t_{n}')} u(t_{n}', N(t_{n}')^{-1} \cdot) \rightarrow u_{0}, \qquad \text{in} \qquad L^{2}.
\end{equation}
Now by $(\ref{2.11})$ and $(\ref{2.12})$,
\begin{equation}\label{2.24}
\int_{t_{n}^{-}}^{t_{n}'} N(t)^{-2} dt \rightarrow \infty, \qquad \text{and} \qquad \int_{t_{n}'}^{t_{n}^{+}} N(t)^{-2} dt \rightarrow \infty.
\end{equation}
Therefore, $u_{0}$ is the initial data for an almost periodic solution to $(\ref{1.1})$ that satisfies $N(t) \lesssim 1$ for all $t \in \mathbb{R}$. Also, by definition of $osc(T)$, $(\ref{2.20})$ must also hold for this solution, so in particular, this solution must satisfy $(2)$ of Theorem $\ref{t2.2}$. But, as has already been mentioned, such a solution cannot exist, so therefore, an almost periodic solution satisfying $osc(T) \rightarrow \infty$ and $\inf_{t_{0} \in I} a(t_{0}) = 0$ cannot exist.\medskip

In the case that $osc(T) \rightarrow \infty$ and $\inf_{t_{0} \in I} a(t_{0}) > 0$, it is possible to follow the arguments in \cite{killip2009cubic} to prove that, possibly after passing to a sequence and rescaling,
\begin{equation}\label{2.25}
N(t_{n})^{-1} e^{i \gamma(t_{n})} u(t_{n}, N(t_{n})^{-1} \cdot) \rightarrow u_{0}, \qquad \text{in} \qquad L^{2},
\end{equation}
and that $u_{0}$ is the initial data for an almost periodic solution satisfying $(3)$ in Theorem $\ref{t2.2}$. Again by Theorem $\ref{t2.2}$, such a solution does not exist, which proves Lemma $\ref{l2.5}$.
\end{proof}

It also follows directly from Theorem $\ref{t2.4}$ that
\begin{corollary}\label{c2.6}
If $u$ is an almost periodic solution to $(\ref{1.1})$, then $\| u \|_{L^{2}} = \| Q \|_{L^{2}}$.
\end{corollary}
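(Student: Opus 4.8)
The plan is to deduce Corollary~\ref{c2.6} directly from Theorem~\ref{t2.4}, together with two elementary facts: the $L^{2}(\mathbb{R}^{2})$ norm is invariant under the mass-critical rescaling $f \mapsto \lambda e^{i\gamma} f(\lambda \cdot)$, and the mass $(\ref{1.4})$ is conserved along the flow of $(\ref{1.1})$.

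First I would apply Theorem~\ref{t2.4} to obtain a sequence of times $t_{n} \in I$ and parameters $\lambda(t_{n}) > 0$, $\gamma(t_{n}) \in \mathbb{R}$ such that, setting
\begin{equation*}
v_{n}(r) := \lambda(t_{n}) e^{i \gamma(t_{n})} u(t_{n}, \lambda(t_{n}) r),
\end{equation*}
one has $v_{n} \to Q$ strongly in $L^{2}(\mathbb{R}^{2})$. Next I would record that the rescaling leaves the mass unchanged: the change of variables $y = \lambda(t_{n}) x$ in $\mathbb{R}^{2}$ gives $\| v_{n} \|_{L^{2}}^{2} = \lambda(t_{n})^{2} \int_{\mathbb{R}^{2}} |u(t_{n}, \lambda(t_{n}) |x|)|^{2} \, dx = \int_{\mathbb{R}^{2}} |u(t_{n}, |y|)|^{2} \, dy = \| u(t_{n}) \|_{L^{2}}^{2}$. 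By conservation of mass, $\| u(t_{n}) \|_{L^{2}}^{2} = M[u(t_{n})]$ is independent of $n$ and equals $\| u \|_{L^{2}}^{2}$.

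Finally, strong convergence $v_{n} \to Q$ in $L^{2}$ forces $\| v_{n} \|_{L^{2}} \to \| Q \|_{L^{2}}$; combining this with the previous step yields $\| u \|_{L^{2}} = \lim_{n \to \infty} \| v_{n} \|_{L^{2}} = \| Q \|_{L^{2}}$, which is the claim.

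There is essentially no obstacle here: the only point requiring attention is that Theorem~\ref{t2.4} supplies \emph{strong} $L^{2}$ convergence rather than merely the weak convergence of $(\ref{1.8})$ — a weak limit would give only the inequality $\| Q \|_{L^{2}} \le \| u \|_{L^{2}}$ by lower semicontinuity of the norm. Since the strong convergence is already established, the corollary follows at once.
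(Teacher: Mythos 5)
Your argument is correct and is precisely the direct deduction the paper intends: it states that Corollary~\ref{c2.6} ``follows directly from Theorem~\ref{t2.4},'' and your chain --- strong $L^{2}$ convergence of the rescaled solution to $Q$, invariance of the $L^{2}(\mathbb{R}^{2})$ norm under the scaling and phase symmetries, and conservation of mass $(\ref{1.4})$ --- is exactly that deduction. Your closing remark about why strong (rather than weak) convergence is needed is also apt.
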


\section{Solutions with mass equal to the ground state}
Now, we prove Theorem $\ref{t1.1}$ in the case when $\| u_{0} \|_{L^{2}}^{2} = 8 \pi (m + 1)$, the mass of the ground state.
\begin{theorem}\label{t3.1}
Suppose $u(t, r)$ is a solution to $(\ref{1.1})$ that blows up forward in time on the maximal interval in time, $I = (a, b)$. Then, there exists a sequence $t_{n} \nearrow b$ and $\lambda(t_{n}) > 0$, such that
\begin{equation}\label{3.1}
\lambda(t_{n}) u(t_{n}, \lambda(t_{n}) r) \rightarrow Q, \qquad \text{in} \qquad L^{2}.
\end{equation}
\end{theorem}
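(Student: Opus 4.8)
The plan is to exhibit $u$ as, asymptotically near the blowup time, a rescaled copy of a minimal-mass almost periodic blowup solution, and then to quote the classification of such solutions from Section 2. Set $m_{0} := 8\pi(m+1)$, which by $(\ref{1.11})$ equals $\| Q \|_{L^{2}}^{2}$. By Theorem $\ref{t1.3}$, every solution of mass strictly below $m_{0}$ is global and scatters; hence $m_{0}$ is the least mass at which blowup can occur, and since $u$ has mass exactly $m_{0}$ and blows up forward in time, $u$ is a minimal-mass forward-blowup solution. By the $L^{2}$ local theory, $\| u \|_{L_{t,x}^{4}([0,s] \times \mathbb{R}^{2})} < \infty$ for every $s < b$, so $(\ref{1.7})$ is equivalent to the statement that $\| u \|_{L_{t,x}^{4}([s,b) \times \mathbb{R}^{2})} = \infty$ for every $s < b$.

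First I would fix a sequence $t_{n} \nearrow b$ and apply the $L^{2}$ profile decomposition of \cite{liu2016global} to the bounded sequence $\{ u(t_{n}) \} \subset L^{2}$; for $m$-equivariant data the symmetry group is generated by $L^{2}$-scaling, the phase rotation $(\ref{1.18})$, and time translation. Mass decoupling gives $\sum_{j} M[\phi^{j}] \leq m_{0}$, and the remainder has asymptotically vanishing Strichartz norm. If every profile had mass strictly less than $m_{0}$, then by Theorem $\ref{t1.3}$ each associated nonlinear profile is global with finite scattering norm, and the standard nonlinear-superposition and stability argument would bound the forward Strichartz norm of the nonlinear evolution of $u(t_{n})$; but that evolution is $u$ restricted to $[t_{n}, b)$, whose $L_{t,x}^{4}$ norm is infinite, a contradiction. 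Hence exactly one profile $\phi$ survives, it carries the full mass $m_{0}$, and every other profile together with the remainder tends to $0$ in $L^{2}$. After normalizing the time-translation parameter to $0$ by a short-time stability argument, I obtain scales $\lambda_{n} > 0$ and phases $\gamma_{n}$ with $\lambda_{n} e^{i\gamma_{n}} u(t_{n}, \lambda_{n} \,\cdot\,) \to \phi$ in $L^{2}$.

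Next I would identify the limit. Let $v$ be the maximal-lifespan solution with data $\phi$. A stability argument shows $v$ cannot scatter forward (else $u$ on $[t_{n}, b)$ would have finite Strichartz norm) and cannot scatter backward (else $u$, being $L^{2}$-close at time $t_{n}$ to a rescaled copy of $\phi$ whose backward evolution has finite Strichartz norm, would extend past $a$, contradicting maximality of $I$; the degenerate subcase $a = -\infty$, in which $u$ is itself global and backward-scattering, is handled separately). So $v$ is a minimal-mass solution blowing up in both time directions, and Proposition $\ref{p2.3}$ together with the Arzela--Ascoli compactness furnished by the profile decomposition --- now applied to the time-translates of $v$ --- shows that $v$ is almost periodic modulo scaling. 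The same stability reasoning also gives $\lambda_{n} e^{i\gamma_{n}} u(t_{n} + s\lambda_{n}^{2}, \lambda_{n} \,\cdot\,) \to v(s)$ in $L^{2}$ for each fixed $s$, so near the blowup time $u$ is, at scale $\lambda_{n}$, a rescaled copy of $v$. Now Theorem $\ref{t2.4}$ applied to $v$ produces times $s_{k} \in I_{v}$ and parameters $\mu_{k} > 0$, $\beta_{k} \in \mathbb{R}$ with $\mu_{k} e^{i\beta_{k}} v(s_{k}, \mu_{k} \,\cdot\,) \to Q$ in $L^{2}$ (and Corollary $\ref{c2.6}$ gives $\| v \|_{L^{2}} = \| Q \|_{L^{2}} = \| u \|_{L^{2}}$, as it must). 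Diagonalizing in $k$ --- choosing $n(k)$ so large that the time shift $s_{k} \lambda_{n(k)}^{2}$ is negligible compared with $b - t_{n(k)}$ and the various $L^{2}$ errors are $O(1/k)$ --- then produces a sequence $t_{k}' \nearrow b$, scales $\Lambda_{k} := \mu_{k} \lambda_{n(k)}$, and phases $\theta_{k}$ with $\Lambda_{k} e^{i\theta_{k}} u(t_{k}', \Lambda_{k} \,\cdot\,) \to Q$ in $L^{2}$; passing to a further subsequence along which $\theta_{k}$ converges and absorbing the limiting phase (using that $Q$ is real and $(\ref{1.1})$ is phase-invariant) gives $(\ref{3.1})$.

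The step I expect to be the main obstacle is the profile-decomposition stage in the equivariant $L^{2}$ setting: verifying that the decomposition and the accompanying stability theory are genuinely available for $(\ref{1.1})$ (these are developed in \cite{liu2016global}), that mass decoupling forces a single surviving profile, and --- most delicately --- that the surviving nonlinear profile blows up in both time directions. A secondary technical point is the bookkeeping in the final diagonal argument needed to guarantee that the extracted times actually converge to $b$.
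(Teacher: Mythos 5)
Your proposal is correct and follows essentially the same route as the paper: a profile decomposition at times $t_{n} \nearrow b$, Theorem \ref{t1.3} plus stability forcing a single full-mass profile converging strongly in $L^{2}$, the limiting solution blowing up in both time directions and hence being almost periodic, and then Theorem \ref{t2.4} combined with a diagonal argument to extract times converging to $b$. The one rough spot is your case split at $a = -\infty$ when excluding backward scattering of the nonlinear profile $v$: no separate case is needed, since forward blowup alone gives $\| u \|_{L_{t,x}^{4}((\inf(I), t_{n}] \times \mathbb{R}^{2})} \rightarrow \infty$ as $t_{n} \nearrow b$, which contradicts the uniform Strichartz bound that stability would furnish if $v$ scattered backward; this is exactly how the paper argues via $(\ref{3.17})$.
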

\begin{remark}
Note that in this case, as in the previous case, we have convergence in $L^{2}$ norm, not just weak convergence in $L^{2}$.
\end{remark}
\begin{proof}
First use the profile decomposition in \cite{liu2016global}.
\begin{proposition}[Linear profile decomposition]\label{p3.2}
Let $G_{max}$ be the symmetry group generated by scaling transformations,
\begin{equation}\label{3.2}
g_{\lambda} f(r) = \lambda^{-1} f(\lambda^{-1} r).
\end{equation}
Let $\psi_{n}$ be a bounded sequence in $L_{m}^{2}$. Then, possibly after passing to a subsequence, there exists a sequence of functions $\phi^{j} \in L_{m}^{2}$, group elements $g_{n}^{j} \in G_{max}$, and times $t_{n}^{j} \in \mathbb{R}$, such that we have the decomposition
\begin{equation}\label{3.3}
\psi_{n} = \sum_{j = 1}^{J} g_{n}^{j} e^{it_{n}^{j} \Delta} \phi^{j} + w_{n}^{J},
\end{equation}
for all $j = 1, 2, ...$. Moreover, $w_{n}^{J} \in L_{m}^{2}$ is such that its linear evolution has asymptotically vanishing scattering size
\begin{equation}\label{3.4}
\lim_{J \rightarrow \infty} \limsup_{n \rightarrow \infty} \| e^{it \Delta} w_{n}^{J} \|_{L_{t,x}^{4}} = 0.
\end{equation}
Moreover, for any $j \neq j'$,
\begin{equation}\label{3.5}
\frac{\lambda_{n}^{j}}{\lambda_{n}^{j'}} + \frac{\lambda_{n}^{j'}}{\lambda_{n}^{j}} + \frac{|t_{n}^{j} (\lambda_{n}^{j})^{2} - t_{n}^{j'} (\lambda_{n}^{j'})^{2}|}{\lambda_{n}^{j} \lambda_{n}^{j'}} \rightarrow \infty.
\end{equation}
Furthermore, for any $J \geq 1$, we have the mass decoupling property,
\begin{equation}\label{3.6}
\lim_{n \rightarrow \infty} [\| \psi_{n} \|_{L^{2}}^{2} - \sum_{j = 1}^{J} \| \psi_{j} \|_{L^{2}}^{2} - \| w_{n}^{J} \|_{L^{2}}^{2}] = 0.
\end{equation}
\end{proposition}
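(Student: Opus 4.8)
The plan is to run the by-now-standard $L^{2}$ concentration--compactness scheme (Keraani, B\'egout--Vargas), carried out in the $m$-equivariant class, where -- unlike for the full mass-critical problem on $\mathbb{R}^{2}$ -- the only non-compact symmetry surviving the equivariance constraint is scaling; this is the route taken in \cite{liu2016global}. Throughout, $e^{it\Delta}$ denotes the linear propagator on $L^{2}_{m}$: after conjugation by $e^{im\theta}$ it is generated by the twisted Laplacian $\partial_{r}^{2}+\tfrac1r\partial_{r}-\tfrac{m^{2}}{r^{2}}$, for which the usual $L^{4}_{t,x}(\mathbb{R}\times\mathbb{R}^{2})$ Strichartz estimate holds. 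The first step is a refined Strichartz estimate: a Littlewood--Paley decomposition combined with the two-dimensional bilinear Strichartz estimate yields, for $f\in L^{2}_{m}$, a bound of the form $\|e^{it\Delta}f\|_{L^{4}_{t,x}}^{4}\lesssim\|f\|_{L^{2}}^{2}\,\sup_{N}\|e^{it\Delta}P_{N}f\|_{L^{4}_{t,x}}^{2}$, so that the $L^{4}$ mass of a non-vanishing sequence concentrates at a single dyadic frequency scale.

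From this I would derive the \emph{inverse Strichartz inequality}: if $\psi_{n}$ is bounded in $L^{2}_{m}$ with $\liminf_{n}\|e^{it\Delta}\psi_{n}\|_{L^{4}_{t,x}}\ge\epsilon>0$, then, after passing to a subsequence, there are symmetry operators $G_{n}$ (a scaling $g_{\lambda_{n}}$ composed with a propagator $e^{it_{n}\Delta}$, as in $(\ref{3.3})$) and a nonzero $\phi\in L^{2}_{m}$, with $\|\phi\|_{L^{2}}$ bounded below by a positive power of $\epsilon$ times a power of $\limsup_{n}\|\psi_{n}\|_{L^{2}}$, such that $G_{n}^{-1}\psi_{n}\rightharpoonup\phi$ weakly in $L^{2}$. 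The scale $\lambda_{n}$ is the reciprocal of the dyadic frequency selected above (so the equivariant function's spatial concentration, occurring on a sphere whose radius is of order $\lambda_{n}$, needs no translation parameter), and $t_{n}$ is read off from where the rescaled $L^{4}$ density concentrates in time; the rescaled, frequency-normalized sequence is then bounded in a space that embeds compactly into $L^{2}_{\mathrm{loc}}(\mathbb{R}\times\mathbb{R}^{2})$ (via local smoothing), and passing to a weak limit and testing against a bump at the concentration point produces a nonzero, automatically $m$-equivariant, profile. Weak convergence together with the unitarity of $G_{n}$ gives the Pythagoras relation $\|\psi_{n}-G_{n}\phi\|_{L^{2}}^{2}=\|\psi_{n}\|_{L^{2}}^{2}-\|\phi\|_{L^{2}}^{2}+o_{n}(1)$.

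The decomposition $(\ref{3.3})$ is then produced by iterating this extraction. Set $w_{n}^{0}=\psi_{n}$; given $w_{n}^{J}$, stop if $\limsup_{n}\|e^{it\Delta}w_{n}^{J}\|_{L^{4}_{t,x}}=0$, and otherwise apply the inverse Strichartz inequality to $w_{n}^{J}$ to extract $\phi^{J+1}$, $g_{n}^{J+1}$, $t_{n}^{J+1}$, and set $w_{n}^{J+1}=w_{n}^{J}-g_{n}^{J+1}e^{it_{n}^{J+1}\Delta}\phi^{J+1}$. Iterating the Pythagoras identity gives, for each fixed $J$, $\|\psi_{n}\|_{L^{2}}^{2}=\sum_{j\le J}\|\phi^{j}\|_{L^{2}}^{2}+\|w_{n}^{J}\|_{L^{2}}^{2}+o_{n}(1)$, hence $\sum_{j}\|\phi^{j}\|_{L^{2}}^{2}\le\limsup_{n}\|\psi_{n}\|_{L^{2}}^{2}<\infty$ and $\|\phi^{J}\|_{L^{2}}\to0$; since the inverse-Strichartz lower bound forces $\|\phi^{J+1}\|_{L^{2}}$ to dominate a positive power of $\limsup_{n}\|e^{it\Delta}w_{n}^{J}\|_{L^{4}_{t,x}}$, the remainder's scattering size tends to $0$ as $J\to\infty$, which is $(\ref{3.4})$. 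For the asymptotic orthogonality $(\ref{3.5})$ I would induct on $\max(j,j')$, maintaining that $(g_{n}^{j})^{-1}e^{-it_{n}^{j}\Delta}w_{n}^{J}\rightharpoonup0$ for all $j\le J$: if for some $j<j'$ all three ratios in $(\ref{3.5})$ stayed bounded along a subsequence, then $(g_{n}^{j'})^{-1}e^{-it_{n}^{j'}\Delta}g_{n}^{j}e^{it_{n}^{j}\Delta}$ would converge strongly to a unitary operator, and applying it to the relation $(g_{n}^{j})^{-1}e^{-it_{n}^{j}\Delta}w_{n}^{j'-1}\rightharpoonup0$ would contradict the extraction of $\phi^{j'}\neq0$. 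Finally, expanding $\|\psi_{n}\|_{L^{2}}^{2}$ via $(\ref{3.3})$, the cross terms $\langle g_{n}^{j}e^{it_{n}^{j}\Delta}\phi^{j},w_{n}^{J}\rangle$ vanish by construction of $w_{n}^{J}$, and the cross terms among distinct profiles vanish by $(\ref{3.5})$, which yields the mass decoupling $(\ref{3.6})$.

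The main obstacle is the first step -- the inverse Strichartz inequality: one needs a sharp enough refined (or bilinear) Strichartz estimate for the twisted two-dimensional Laplacian, and one must verify that within the equivariant class the only defect of compactness is the scaling group, so that the extracted profile is a genuine nonzero element of $L^{2}_{m}$ (no translation or Galilean parameters appear). Once this is established, the iteration, the orthogonality of parameters, and the decoupling are the routine concentration--compactness bookkeeping.
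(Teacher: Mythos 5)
Your proposal is correct and follows essentially the same route as the paper, which simply invokes the profile decomposition of \cite{liu2016global} and notes that the proof is identical to the standard concentration--compactness argument for the mass-critical NLS (refined/inverse Strichartz, iterative extraction, parameter orthogonality, and mass decoupling), with the equivariance eliminating translation and Galilean parameters so that only scaling and time translations remain. The technical point you flag about the inverse Strichartz step in the equivariant class is exactly what is handled in \cite{liu2016global} (as in the radial mass-critical theory), so no further argument is needed beyond that citation.
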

The proof of profile decomposition is the same as in the mass-critical nonlinear Schr{\"o}dinger equation, see \cite{MR2445122}.\medskip

Now suppose that $u$ is a solution to $(\ref{1.1})$ that blows up forward in time. That is, for any $t_{0} \in I$, where $I$ is the maximal interval of existence of $u$, then
\begin{equation}\label{3.7}
\| u \|_{L_{t,x}^{4}([t_{0}, \sup(I)) \times \mathbb{R}^{2})} = \infty.
\end{equation}
Let $t_{n} \nearrow \sup(I)$ be a sequence of times converging to $\sup(I)$. Applying Theorem $\ref{t3.1}$ to $u(t_{n}, r)$, possibly after passing to a subsequence,
\begin{equation}\label{3.8}
u(t_{n}) = \sum_{j = 1}^{J} g_{n}^{j} e^{it_{n}^{j} \Delta} \phi^{j} + w_{n}^{J}.
\end{equation}
\begin{lemma}\label{l3.3}
After relabeling, $\| \phi^{1} \|_{L^{2}} = \| Q \|_{L^{2}}$.
\end{lemma}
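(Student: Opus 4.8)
The plan is to run the standard minimal-mass blowup extraction argument of the mass-critical theory (cf.\ \cite{MR2445122} and \cite{liu2016global}), exploiting that below the ground-state mass every solution of $(\ref{1.1})$ scatters (Theorem \ref{t1.3}). After passing to a further subsequence we may assume that for each $j$ the parameter $t_n^j$ is either identically $0$ or tends to $\pm\infty$. Suppose, toward a contradiction, that $\| \phi^j \|_{L^2}^2 < 8\pi(m+1)$ for every $j$. By the mass decoupling $(\ref{3.6})$ and conservation of mass, $\sum_j \| \phi^j \|_{L^2}^2 \le \lim_n \| u(t_n) \|_{L^2}^2 = 8\pi(m+1) < \infty$; in particular all but finitely many profiles have mass below the small-data scattering threshold of \cite{liu2016global}. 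For each $j$ let $u^j$ be the associated nonlinear profile: if $t_n^j \equiv 0$, then $u^j$ solves $(\ref{1.1})$ with $u^j(0) = \phi^j$; if $t_n^j \to \pm\infty$, then $u^j$ is the solution of $(\ref{1.1})$ scattering to $e^{it\Delta}\phi^j$ as $t \to \pm\infty$. Since each profile has mass strictly below $8\pi(m+1)$, Theorem \ref{t1.3} makes every $u^j$ global with $\| u^j \|_{L^4_{t,x}(\mathbb{R}\times\mathbb{R}^2)} < \infty$, and for the profiles of small mass the small-data theory gives $\| u^j \|_{L^4_{t,x}} \lesssim \| \phi^j \|_{L^2}$; hence $\sum_j \| u^j \|_{L^4_{t,x}}^2 < \infty$.

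Next I would form the approximate solution $u_n^J$ in the usual way: evolve each of the first $J$ profiles nonlinearly, superpose the results after applying the corresponding parabolic rescalings $g_n^j$ and time translations $t_n^j(\lambda_n^j)^2$, and add the free evolution $e^{it\Delta} w_n^J$ of the remainder, so that schematically
\[
u_n^J(t) \;=\; \sum_{j=1}^{J} g_n^j\big[\,u^j\,\big]\big(t - t_n^j(\lambda_n^j)^2\big) \;+\; e^{it\Delta} w_n^J .
\]
The asymptotic orthogonality $(\ref{3.5})$ of the parameters, the finiteness of $\sum_j \| u^j \|_{L^4_{t,x}}^2$, and $(\ref{3.4})$ give that $\limsup_n \| u_n^J \|_{L^4_{t,x}}$ is bounded uniformly in $J$. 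Using in addition the multilinear Strichartz estimates and the bounds on the nonlocal coefficients $A_t[\cdot]$ and $A_\theta[\cdot]$ from \cite{liu2016global}, one checks that $u_n^J$ solves $(\ref{1.1})$ up to an error that vanishes in the relevant dual Strichartz norm as $n \to \infty$ and then $J \to \infty$, while $\| u(t_n) - u_n^J(0) \|_{L^2} \to 0$ (exact when $t_n^j \equiv 0$, and a consequence of scattering when $|t_n^j| \to \infty$, since $g_n^j$ is an $L^2$-isometry). The stability (perturbation) lemma for $(\ref{1.1})$ then forces $\| u \|_{L^4_{t,x}([t_n, \sup I)\times\mathbb{R}^2)} < \infty$ for $n$ large, and in fact that $u$ extends globally forward in time, contradicting $(\ref{3.7})$. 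Therefore at least one profile, which after relabeling we call $\phi^1$, satisfies $\| \phi^1 \|_{L^2}^2 \ge 8\pi(m+1)$.

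Finally, $(\ref{3.6})$ together with $\lim_n \| u(t_n) \|_{L^2}^2 = 8\pi(m+1)$ and nonnegativity of the profile masses force $\| \phi^1 \|_{L^2}^2 = 8\pi(m+1) = \| Q \|_{L^2}^2$, and moreover $\phi^j = 0$ for all $j \ge 2$ and $\| w_n^J \|_{L^2} \to 0$. I expect the \emph{main obstacle} to lie in the second paragraph: verifying that $u_n^J$ is an admissible approximate solution in the presence of the nonlocal connection terms (which couple distinct profiles through $A_t$ and $A_\theta$), and invoking a stability theory robust enough to absorb both those terms and the $e^{it\Delta} w_n^J$ piece. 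All of the needed ingredients — small-data scattering, the long-time Strichartz and stability estimates, and the orthogonality bookkeeping — are available from \cite{liu2016global}, so this step is an adaptation of known machinery rather than a source of new difficulty, while the passage to equality in the last step is purely arithmetic.
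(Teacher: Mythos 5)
Your proposal is correct and follows essentially the same route as the paper: assume for contradiction that every profile has mass strictly below $\| Q \|_{L^{2}}$, use Theorem \ref{t1.3} (plus small-data bounds for all but finitely many profiles) to get uniform global space-time bounds, reassemble an approximate solution whose error vanishes via the orthogonality $(\ref{3.5})$ and the remainder bound $(\ref{3.4})$, and invoke the stability theory of \cite{liu2016global} to contradict forward blowup, after which mass decoupling $(\ref{3.6})$ forces equality. The only cosmetic difference is that you work with $n$-independent nonlinear profiles (handling $t_{n}^{j} \to \pm\infty$ by scattering), whereas the paper evolves the data $g_{n}^{j} e^{i t_{n}^{j}\Delta}\phi^{j}$ directly from time $t_{n}$; these are interchangeable formulations of the same argument.
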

\begin{proof}
Suppose instead that
\begin{equation}\label{3.9}
\sup_{j} \| \phi^{j} \|_{L^{2}} < \| Q \|_{L^{2}}.
\end{equation}
For any $j$, let $v_{n}^{j}(t)$ be the solution to $(\ref{1.1})$ that satisfies
\begin{equation}\label{3.10}
v_{n}^{j}(t_{n}) = g_{n}^{j} e^{i t_{n}^{j} \Delta} \phi^{j}.
\end{equation}
By Theorem $\ref{t1.3}$,
\begin{equation}\label{3.11}
\sup_{j} \| v_{n}^{j} \|_{L_{t,x}^{4}(\mathbb{R} \times \mathbb{R}^{2})} < \infty.
\end{equation}
Also, rearranging $\phi^{j}$ so that $\| \phi^{j} \|_{L^{2}}$ is decreasing, there exists some $J_{0} < \infty$ such that, for any $j > J_{0}$,
\begin{equation}\label{3.12}
\| v_{n}^{j} \|_{L_{t,x}^{4}(\mathbb{R} \times \mathbb{R}^{2})} \lesssim \| v_{n}^{j} \|_{L_{t}^{\infty} L_{x}^{2}}.
\end{equation}
Therefore, by $(\ref{3.5})$ and $(\ref{3.6})$,
\begin{equation}\label{3.13}
\sup_{J} \lim_{n \rightarrow \infty} \| \sum_{j = 1}^{J} v_{n}^{j}(t) \|_{L_{t,x}^{4}(\mathbb{R} \times \mathbb{R}^{2})} < \infty.
\end{equation}
Furthermore, by standard perturbation arguments, see for example section three of \cite{liu2016global}, $(\ref{3.5})$ implies that in this case, for any $J < \infty$
\begin{equation}\label{3.14}
\tilde{u}_{n}^{J}(t) \rightarrow \sum_{j = 1}^{J} v_{n}^{j}(t), \qquad \text{in} \qquad L_{t,x}^{4}, \qquad \text{as} \qquad n \rightarrow \infty,
\end{equation}
where $\tilde{u}_{n}^{J}(t)$ is the solution to $(\ref{1.1})$ with
\begin{equation}\label{3.15}
\tilde{u}_{n}^{J}(t_{n}) = \sum_{j = 1}^{J} g_{n}^{j} e^{it_{n}^{j} \Delta} \phi^{j}.
\end{equation}
Combining $(\ref{3.4})$, $(\ref{3.13})$, and standard perturbation arguments, we obtain a contradiction if $(\ref{3.9})$ holds.
\end{proof}

Lemma $\ref{l3.3}$ implies that $\phi^{j} = 0$ for $j > 1$, and by $(\ref{3.6})$,
\begin{equation}\label{3.16}
\lim_{n \rightarrow \infty} \| w_{n} \|_{L^{2}} = 0.
\end{equation}
Furthermore, we must have $\sup_{n} |t_{n}^{1}| < \infty$. Indeed, by $(\ref{3.7})$ and $t_{n} \nearrow \sup(I)$,
\begin{equation}\label{3.17}
\lim_{n \rightarrow \infty} \| u \|_{L_{t,x}^{4}((\inf(I), t_{n}] \times \mathbb{R}^{2})} = \infty, \qquad \text{and} \qquad \lim_{n \rightarrow \infty} \| u \|_{L_{t,x}^{4}([t_{n}, \sup(I)) \times \mathbb{R}^{2})} = \infty.
\end{equation}
On the other hand, if, after passing to a subsequence, $t_{n}^{1} \nearrow \infty$, Strichartz estimates (see \cite{strichartz1977restrictions}) imply
\begin{equation}\label{3.18}
\| e^{it \Delta} g_{n}^{j} e^{it_{n}^{1} \Delta} \phi^{j} \|_{L_{t,x}^{4}([0, \infty) \times \mathbb{R}^{2})} = 0,
\end{equation}
which by standard perturbation arguments combined with $(\ref{3.16})$ would contradict $(\ref{3.17})$ forward in time. A similar argument rules out $t_{n}^{1} \searrow -\infty$. Therefore, passing to a further subsequence, we can take $t_{n}^{1} \rightarrow t_{0}$, and then absorb $t_{0}$ into $\phi^{1}$,
\begin{equation}\label{3.19}
e^{it_{n}^{1} \Delta} \phi^{1} \mapsto \phi^{1}.
\end{equation}
Therefore, we have proved that if $t_{n} \nearrow \sup(I)$, then, possibly after passing to a subsequence, there exists $\lambda(t_{n}) > 0$ and $\gamma(t_{n}) \in \mathbb{R}$ such that
\begin{equation}\label{3.20}
\lambda(t_{n}) u(t_{n}, \lambda(t_{n}) \cdot) \rightarrow \phi, \qquad \text{in} \qquad L_{m}^{2}.
\end{equation}

Now let $\tilde{u}(t, r)$ be the solution to $(\ref{1.1})$ with initial data equal to $\phi$. Again, $(\ref{3.17})$ implies that if $\tilde{I}$ is the maximal interval of existence of $\tilde{u}$,
\begin{equation}\label{3.21}
\| \tilde{u} \|_{L_{t,x}^{4}((\inf(\tilde{I}), 0] \times \mathbb{R}^{2})} = \| \tilde{u} \|_{L_{t,x}^{4}([0, \sup(\tilde{I})) \times \mathbb{R}^{2})} = \infty.
\end{equation}
Applying the above arguments to a sequence $\tilde{t}_{n} \in \tilde{I}$; this time $\tilde{t}_{n}$ can go to $\sup(\tilde{I})$, $\inf(\tilde{I})$, or converge along a subsequence to some $t_{0} \in \tilde{I}$; after passing to a subsequence, there exists some $\phi \in L_{m}^{2}$ such that
\begin{equation}\label{3.22}
\lambda(\tilde{t}_{n}) u(\tilde{t}_{n}, \lambda(\tilde{t}_{n}) \cdot) \rightarrow \phi, \qquad \text{in} \qquad L_{m}^{2}.
\end{equation}
Thus, $u(t)$ is sequentially compact in $L^{2} / G_{max}$, which by the Arzela--Ascoli theorem is equivalent to $(\ref{2.1})$. Therefore, we are back in section two.\medskip

By Theorem $\ref{t2.4}$, there exists a sequence $\tilde{t}_{n} \in \tilde{I}$, $\tilde{\lambda}(\tilde{t}_{n})$, and $\gamma(\tilde{t}_{n}) \in \mathbb{R}$, such that
\begin{equation}\label{3.23}
\tilde{\lambda}(\tilde{t}_{n}) e^{i \gamma(\tilde{t}_{n})} \tilde{u}(\tilde{t}_{n}, \tilde{\lambda}(\tilde{t}_{n}) \cdot) \rightarrow Q, \qquad \text{in} \qquad L_{m}^{2}.
\end{equation}
Now, by $(\ref{3.20})$,
\begin{equation}\label{3.24}
\lambda(t_{n}) u(t_{n}, \lambda(t_{n}) \cdot) \rightarrow \tilde{u}(0), \qquad \text{in} \qquad L_{m}^{2},
\end{equation}
and again, by standard perturbative arguments and $(\ref{1.17})$, for any $t \in \tilde{I}$, for $n' \geq N(t)$ sufficiently large,
\begin{equation}\label{3.25}
\lambda(t_{n'}) u(t_{n'} + \lambda(t_{n'})^{2} t, \lambda(t_{n'}) \cdot) \rightarrow \tilde{u}(t), \qquad \text{in} \qquad L_{m}^{2}.
\end{equation}
\begin{remark}
This is not completely straightforward since $t_{n'} + \lambda(t_{n'})^{2} t$ might not lie in the interval of existence of $u$. Nevertheless, since we can have at most one blowup profile, standard perturbative analysis implies that $t_{n'} + \lambda(t_{n'})^{2} t \in I$ for $n'$ sufficiently large.
\end{remark}

Therefore, for any fixed $n$,
\begin{equation}\label{3.26}
\lambda(t_{n'}) u(t_{n'} + \lambda(t_{n'})^{2} \tilde{t}_{n}, \lambda(t_{n'}) \cdot) \rightarrow \tilde{u}(\tilde{t}_{n}), \qquad \text{in} \qquad L_{m}^{2},
\end{equation}
as $n' \rightarrow \infty$, and therefore,
\begin{equation}\label{3.27}
\lambda(t_{n'}) \tilde{\lambda}(\tilde{t}_{n}) u(t_{n'} + \lambda(t_{n'})^{2} \tilde{t}_{n}, \tilde{\lambda}(\tilde{t}_{n}) \lambda(t_{n'}) \cdot) \rightarrow \tilde{\lambda}(\tilde{t}_{n}) \tilde{u}(\tilde{t}_{n}, \tilde{\lambda}(\tilde{t}_{n}) \cdot), \qquad \text{in} \qquad L_{m}^{2},
\end{equation}
as $n' \rightarrow \infty$. Therefore, we can choose $\epsilon = 2^{-n}$ such that
\begin{equation}\label{3.28}
\| \lambda(t_{n'(n)}) \tilde{\lambda}(\tilde{t}_{n}) u(t_{n'(n)} + \lambda(t_{n'(n)})^{2} \tilde{t}_{n}, \tilde{\lambda}(\tilde{t}_{n}) \lambda(t_{n'(n)}) \cdot) - \tilde{\lambda}(\tilde{t}_{n}) \tilde{u}(\tilde{t}_{n}, \tilde{\lambda}(\tilde{t}_{n}) \cdot) \|_{L^{2}} \leq 2^{-n}.
\end{equation}
Combining $(\ref{3.23})$ and $(\ref{3.28})$,
\begin{equation}\label{3.29}
\lambda(t_{n'(n)}) \tilde{\lambda}(\tilde{t}_{n}) u(t_{n'(n)} + \lambda(t_{n'(n)})^{2} \tilde{t}_{n}, \tilde{\lambda}(\tilde{t}_{n}) \lambda(t_{n'(n)}) \cdot) \rightarrow Q, \qquad \text{in} \qquad L_{m}^{2}.
\end{equation}
This proves Theorem $\ref{t3.1}$.
\end{proof}

\section{Sequential convergence}
Now we are ready to prove Theorem $\ref{t1.1}$. Suppose $u$ is a solution to $(\ref{1.1})$ that satisfies
\begin{equation}\label{4.1}
8 \pi(m + 1) < \| u \|_{L^{2}}^{2} < 16 \pi (m + 1),
\end{equation}
and $u$ blows up forward in time. That is, for some $t_{0} \in I$, where $I$ is the interval of existence of $u$,
\begin{equation}\label{4.2}
\| u \|_{L_{t,x}^{4}([t_{0}, \sup(I)) \times \mathbb{R}^{2})} = \infty.
\end{equation}
Now, define the set,
\begin{definition}\label{d4.1}
\begin{equation}\label{4.3}
\mathcal M_{u} = \{ \| u_{0} \|_{L^{2}}^{2} : u_{0} \text{ is the initial data for a blowup solution to } (\ref{1.1}) \} \subset [8 \pi(m + 1), 16 \pi(m + 1)),
\end{equation}
and furthermore there exists $t_{n} \nearrow \sup(I)$, $\lambda(t_{n}) > 0$, such that
\begin{equation}\label{4.4}
\lambda(t_{n}) u(t_{n}, \lambda(t_{n}) \cdot) \rightharpoonup u_{0}, \qquad \text{weakly in} \qquad L_{m}^{2}.
\end{equation}
\end{definition}
\begin{lemma}\label{l4.1}
The set $\mathcal M_{u}$ contains its infimum. That is, if
\begin{equation}\label{4.5}
m = \inf \mathcal M_{u},
\end{equation}
then there exists some $u_{0} \in L_{m}^{2}$, $\| u_{0} \|_{L^{2}}^{2} = m$, $u_{0}$ is the initial data for a blowup solution to $(\ref{1.1})$, and there exists a sequence $t_{n} \nearrow \sup(I)$ and $\lambda(t_{n}) > 0$ such that $(\ref{4.4})$ holds.
\end{lemma}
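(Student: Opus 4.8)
The plan is to take a minimizing sequence $u_0^{(k)}\in\mathcal M_u$ with $m_k:=\|u_0^{(k)}\|_{L^2}^2\searrow m$, apply the linear profile decomposition, Proposition~\ref{p3.2}, a \emph{second} time --- now to the sequence $\{u_0^{(k)}\}_k$ itself --- extract from it the unique profile $\Phi$ that carries the blowup, show $\|\Phi\|_{L^2}^2\le m$ by mass decoupling $(\ref{3.6})$ and $\|\Phi\|_{L^2}^2\ge m$ by exhibiting $\Phi$ as a weak limit of rescalings of $u$ near $\sup(I)$, and conclude $\|\Phi\|_{L^2}^2=m$ with $m$ realized by $u_0=\Phi$. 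A preliminary step is to record the structure of an arbitrary $u_0\in\mathcal M_u$: if $\lambda(t_n)u(t_n,\lambda(t_n)\cdot)\rightharpoonup u_0$ with $t_n\nearrow\sup(I)$, then, exactly as in $(\ref{3.17})$, the $L^4_{t,x}$ norm of $\lambda(t_n)u(t_n,\lambda(t_n)\cdot)$ diverges on both half-lines. Combining $(\ref{3.4})$, Theorem~\ref{t1.3}, the decoupling $(\ref{3.6})$, the hypothesis $(\ref{4.1})$, and the perturbation theory for $(\ref{1.1})$ (Section~3 of \cite{liu2016global}, cf.\ Lemma~\ref{l3.3}), exactly one profile in the decomposition of $\lambda(t_n)u(t_n,\lambda(t_n)\cdot)$ can carry this blowup; it has mass $\ge\|Q\|_{L^2}^2=8\pi(m+1)$; its time-translation parameters stay bounded (otherwise the relevant half-line linear Strichartz norm vanishes and that profile scatters on that side, while no other profile and no $w_n^J$ does); and using the orthogonality property $(g_n^j)^{-1}w_n^J\rightharpoonup0$ together with $(\ref{3.5})$, the weak limit $u_0$ is precisely this blowup profile (up to a bounded free evolution, which I absorb into $u_0$). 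In particular every $u_0\in\mathcal M_u$ generates a solution that blows up in \emph{both} time directions, and, applying the same extraction to $u(t_n)$ directly, $\mathcal M_u\neq\emptyset$.

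Now take the minimizing sequence $u_0^{(k)}$ and, for each $k$, a sequence $t_n^{(k)}\nearrow\sup(I)$ and scales $\lambda_n^{(k)}>0$ with $\lambda_n^{(k)}u(t_n^{(k)},\lambda_n^{(k)}\cdot)\rightharpoonup u_0^{(k)}$ as $n\to\infty$. Since $\{u_0^{(k)}\}_k$ is bounded in $L_m^2$, Proposition~\ref{p3.2} applies (along a subsequence): $u_0^{(k)}=\sum_{j=1}^J H_k^j e^{iT_k^j\Delta}\Phi^j+R_k^J$. Because each $u_0^{(k)}$ generates a two-sided blowup solution while $m_k<16\pi(m+1)$, the same reasoning as in the first paragraph --- now applied to the sequence $\{u_0^{(k)}\}_k$ --- singles out exactly one profile $\Phi:=\Phi^1$ (after relabeling) carrying the blowup: by $(\ref{3.6})$ at most one profile can have mass $\ge\|Q\|_{L^2}^2$ (two would force $\liminf_k m_k\ge16\pi(m+1)$), by Theorem~\ref{t1.3} every scattering profile has mass $<\|Q\|_{L^2}^2$, and some profile must be non-scattering since $u_0^{(k)}$ is a blowup solution. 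One then checks $\|\Phi\|_{L^2}^2\ge8\pi(m+1)$, that $T_k^1$ is bounded (hence convergent along a further subsequence, the limit absorbed into $\Phi$ so that $H_k^1e^{iT_k^1\Delta}\Phi^1=g_k^1\Phi$ is a pure rescaling), and that $\|\Phi\|_{L^2}^2\le\lim_k m_k=m$ by $(\ref{3.6})$.

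It remains to show $\|\Phi\|_{L^2}^2\in\mathcal M_u$. Write $g_k^1f(r)=\Lambda_k^{-1}f(\Lambda_k^{-1}r)$. From the decomposition of $\{u_0^{(k)}\}_k$, the orthogonality relations $(g_k^1)^{-1}R_k^J\rightharpoonup0$ and $(\ref{3.5})$ give $\Lambda_k\,u_0^{(k)}(\Lambda_k\cdot)=(g_k^1)^{-1}u_0^{(k)}\rightharpoonup\Phi$ as $k\to\infty$; while for each fixed $k$, applying the weakly continuous rescaling $(g_k^1)^{-1}$ to $\lambda_n^{(k)}u(t_n^{(k)},\lambda_n^{(k)}\cdot)\rightharpoonup u_0^{(k)}$ gives $\Lambda_k\lambda_n^{(k)}\,u\big(t_n^{(k)},\Lambda_k\lambda_n^{(k)}\cdot\big)\rightharpoonup\Lambda_k u_0^{(k)}(\Lambda_k\cdot)$ as $n\to\infty$. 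Fixing a countable dense set $\{f_i\}\subset L^2$, I would choose $n(k)$ increasing fast enough that $t_{n(k)}^{(k)}\nearrow\sup(I)$ and
\[
\Big|\big\langle \Lambda_k\lambda_{n(k)}^{(k)}\,u\big(t_{n(k)}^{(k)},\Lambda_k\lambda_{n(k)}^{(k)}\cdot\big)-\Lambda_k u_0^{(k)}(\Lambda_k\cdot),\,f_i\big\rangle\Big|<\tfrac1k,\qquad i\le k.
\]
Setting $s_k:=t_{n(k)}^{(k)}$ and $\rho_k:=\Lambda_k\lambda_{n(k)}^{(k)}$, this yields $\rho_k\,u(s_k,\rho_k\cdot)\rightharpoonup\Phi$ weakly in $L_m^2$ with $s_k\nearrow\sup(I)$. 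Since $\Phi$ is the initial data for a (two-sided) blowup solution, $(\ref{4.4})$ holds with $u_0=\Phi$, so $\|\Phi\|_{L^2}^2\in\mathcal M_u$, hence $\|\Phi\|_{L^2}^2\ge m$; combined with $\|\Phi\|_{L^2}^2\le m$ this gives $\|\Phi\|_{L^2}^2=m$, proving Lemma~\ref{l4.1}.

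The main obstacle is the content of the second paragraph: identifying, inside a sequence of \emph{weak} limits, the single profile that carries the blowup, pinning down that its time-translation parameters remain bounded (so that it does not slide off to a free solution and contribute nothing to the weak limit), and checking it inherits two-sided blowup --- this is precisely where the stability theory for $(\ref{1.1})$, the decoupling $(\ref{3.6})$, and the mass gap $(\ref{4.1})$ (which forbids two profiles of mass $\ge\|Q\|_{L^2}^2$) must be used together, as in the threshold analysis of Section~3. The diagonalization in the third paragraph and the local-theory fact that a finite half-line Strichartz norm precludes blowup on that side are then routine.
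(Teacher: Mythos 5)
Your proposal is correct and follows essentially the same route as the paper: a profile decomposition (Proposition \ref{p3.2}) applied to the minimizing sequence of weak limits, identification of a single profile of mass $\geq \| Q \|_{L^{2}}^{2}$ with bounded time translations (using that each $u_{0}^{(n')}$ generates a two-sided blowup solution, itself obtained by decomposing $u(t_{n})$ directly), and a diagonal argument over a countable dense family to realize that profile as a weak limit of rescalings of $u$ along times tending to $\sup(I)$. The only difference is cosmetic: you sandwich $\| \Phi \|_{L^{2}}^{2} = m$ directly, while the paper argues by contradiction that the remaining profiles and the remainder must vanish, which amounts to the same use of the decoupling $(\ref{3.6})$ and minimality.
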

\begin{proof}
Let $\| u_{0}^{(n')} \|_{L^{2}}^{2} \searrow \inf \mathcal M_{u}$ be a sequence of masses where $u_{0}^{(n')} \in L_{m}^{2}$ are the initial data for blowup solutions to $(\ref{1.1})$, and furthermore, for any $n'$, there exists a sequence $t_{n(n')} \nearrow \sup(I)$ and $\lambda(t_{n(n')}) > 0$ such that
\begin{equation}\label{4.6}
\lambda(t_{n(n')}) u(t_{n(n')}, \lambda(t_{n(n')}) \cdot) \rightharpoonup u_{0}^{(n')}, \qquad \text{weakly in} \qquad L_{m}^{2}.
\end{equation}
Apply Proposition $\ref{p3.2}$ to $u_{0}^{(n')}$. Then for any $J$,
\begin{equation}\label{4.7}
u_{0}^{(n')} = \sum_{j = 1}^{J} g_{n'}^{j} e^{it_{n'}^{j} \Delta} \phi^{j} + w_{n'}^{J},
\end{equation}
where $(\ref{3.4})$--$(\ref{3.6})$ holds.\medskip

\noindent \textbf{Claim:}
\begin{equation}\label{4.8}
\sup_{j} \| \phi^{j} \|_{L^{2}}^{2} \geq 8 \pi(m + 1).
\end{equation}
Otherwise, Theorem $\ref{t1.3}$ combined with the computations in the proof of Lemma $\ref{l3.3}$ would imply scattering for $n'$ sufficiently large. Relabelling, suppose that $\| \phi^{1} \|_{L^{2}}^{2} \geq 8 \pi(m + 1)$. The mass decoupling property, $(\ref{3.6})$, implies that $\| \phi^{j} \|_{L^{2}}^{2} < 8 \pi(m + 1)$ for any $j \geq 2$.\medskip

\noindent \textbf{Claim:} Next, we show that, possibly after passing to a subsequence, $t_{n'}^{1}$ converges in $\mathbb{R}$. This follows from the fact that if $u_{0}^{(n')}$ is the initial data for a solution $u^{(n')}$ to $(\ref{1.1})$, $(\ref{4.6})$ implies
\begin{equation}\label{4.9}
\| u^{(n')} \|_{L_{t,x}^{4}((\inf(I^{(n')}), 0] \times \mathbb{R}^{2})} = \| u^{(n')} \|_{L_{t,x}^{4}([0, \sup(I^{(n')})) \times \mathbb{R}^{2})} = \infty.
\end{equation}
Indeed, if $(\ref{4.9})$ is true, then following the argument in $(\ref{3.16})$--$(\ref{3.18})$, either $t_{n}^{1} \nearrow \infty$ or $t_{n}^{1} \searrow -\infty$ would contradict $(\ref{4.9})$. 

The fact that $(\ref{4.9})$ holds follows from $(\ref{4.2})$ and $t_{n'(n)} \nearrow \sup(I)$, and applying a profile decomposition to $u(t_{n'(n)})$. In that case as well, $(\ref{4.8})$ must hold, and furthermore, if $t_{n}^{1} \searrow -\infty$, then we contradict
\begin{equation}\label{4.10}
\lim_{n \rightarrow \infty} \| u \|_{L_{t,x}^{4}((\inf(I), t_{n}] \times \mathbb{R}^{2})} = \infty,
\end{equation}
and if $t_{n}^{1} \nearrow \infty$, then we contradict
\begin{equation}\label{4.11}
\lim_{n \rightarrow \infty} \| u \|_{L_{t,x}^{4}([t_{n}, \sup(I)) \times \mathbb{R}^{2})} = \infty.
\end{equation}
\begin{remark}
The fact that $(\ref{4.9})$ holds is heavily reliant on the fact that $\| u \|_{L^{2}}^{2} < 16 \pi (m + 1)$, and therefore, at most one profile can blow up. If $\| u \|_{L_{2}}^{2} \geq 16 \pi (m + 1)$, then a solution could conceivably decouple into a profile that blows up forward in time and scatters backward in time, and a profile that blows up backward in time and scatters forward in time.
\end{remark}

Furthermore, we must have
\begin{equation}\label{4.12}
\| \phi^{j} \|_{L^{2}} = 0, \qquad \text{for any} \qquad j \geq 2, \qquad \text{and} \qquad \lim_{n' \rightarrow \infty} \| w_{n'} \|_{L^{2}} = 0.
\end{equation}
Otherwise, by $(\ref{3.6})$, $\| \phi^{1} \|_{L^{2}}^{2} < m$, which contradicts $(\ref{4.5})$. Indeed, $(\ref{4.7})$ implies
\begin{equation}\label{4.13}
\lambda_{n'} u_{0}^{(n')} (\lambda_{n'} \cdot) \rightharpoonup \phi^{1},
\end{equation}
weakly in $L^{2}$. Thus, for any $f \in L_{m}^{2}$,
\begin{equation}\label{4.14}
\langle \lambda_{n'} u_{0}^{(n')}(\lambda_{n'} \cdot), f \rangle \rightarrow \langle \phi^{1}, f \rangle,
\end{equation}
where $\langle \cdot, \cdot \rangle$ is the standard $L^{2}$ inner product. Furthermore, by $(\ref{4.6})$, for any $n'$,
\begin{equation}\label{4.15}
\langle \lambda_{n'} \lambda(t_{n(n')}) u(t_{n(n')}, \lambda_{n'} \lambda(t_{n(n')}) \cdot), f \rangle \rightarrow \langle \lambda_{n'} u_{0}^{(n')}(\lambda_{n'} \cdot), f \rangle.
\end{equation}
Choosing $n(n') \geq N(f, n')$ sufficiently large,
\begin{equation}\label{4.16}
| \langle \lambda_{n'} \lambda(t_{n(n')}) u(t_{n(n')}, \lambda_{n'} \lambda(t_{n(n')}) \cdot), f \rangle - \langle \lambda_{n'} u_{0}^{(n')}(\lambda_{n'} \cdot), f \rangle| < 2^{-n'},
\end{equation}
\begin{equation}\label{4.17}
\langle \lambda_{n'} \lambda(t_{n(n')}) u(t_{n(n')}, \lambda_{n'} \lambda(t_{n(n')}) \cdot), f \rangle \rightarrow \langle \phi^{1}, f \rangle.
\end{equation}
Since $L_{m}^{2}$ is separable, we can make a diagonal argument proving that $\| \phi^{1} \|_{L^{2}}^{2} \in \mathcal M_{u}$, which gives contradiction. Indeed, let $f_{1}, f_{2}, f_{3} ...$ be a countable basis of $L_{m}^{2}$. Then, for any $n'$, take
\begin{equation}\label{4.18}
n(n') \geq \max \{ N(f_{1}, n'), ..., N(f_{n'}, n') \},
\end{equation}
such that $|\sup(I) - t_{n(n')}| < \frac{1}{2} |\sup(I) - t_{n(n' - 1)}|$. Then, $(\ref{4.17})$ implies
\begin{equation}\label{4.19}
\lambda_{n'} \lambda(t_{n(n')}) u(t_{n(n')}, \lambda_{n'} \lambda(t_{n(n')}) \cdot) \rightharpoonup \phi^{1}.
\end{equation}
\end{proof}

Now suppose $\tilde{u}_{0} \in L_{m}^{2}$ is such that
\begin{equation}\label{4.20}
\| \tilde{u}_{0} \|_{L^{2}}^{2} = \inf \mathcal M_{u},
\end{equation}
and let $\tilde{u}$ be the solution to $(\ref{1.1})$ with initial data $\tilde{u}_{0}$.
\begin{lemma}\label{l4.2}
The solution $\tilde{u}$ is almost periodic.
\end{lemma}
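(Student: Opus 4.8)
The plan is to verify that the orbit $\{\tilde u(t):t\in\tilde I\}$, where $\tilde I$ is the maximal interval of $\tilde u$, is precompact in $L_m^2$ modulo the scaling group $G_{max}$; by the Arzela--Ascoli argument already used in the proof of Theorem $\ref{t3.1}$, this is exactly $(\ref{2.1})$, i.e.\ almost periodicity. Since $\tilde u_0\in\mathcal M_u$ the solution $\tilde u$ blows up forward in time, and by the reasoning that establishes $(\ref{4.9})$ it blows up backward in time as well. So fix an arbitrary sequence $\tau_n\in\tilde I$; after passing to a subsequence $\tau_n$ converges to a point of $\overline{\tilde I}$. If the limit lies in the interior of $\tilde I$, then $\tilde u(\tau_n)\to\tilde u(\tau_\infty)$ in $L^2$ by continuity of the flow and there is nothing to prove, so assume $\tau_n\nearrow\sup(\tilde I)$; the case $\tau_n\searrow\inf(\tilde I)$ is symmetric.

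First I would apply the linear profile decomposition, Proposition $\ref{p3.2}$, to the bounded sequence $\tilde u(\tau_n)\in L_m^2$, obtaining profiles $\phi^j$, scales $\lambda_n^j$, and times $t_n^j$. Exactly as in Lemma $\ref{l3.3}$ and the Claim $(\ref{4.8})$, some profile has mass at least $8\pi(m+1)$, for otherwise Theorem $\ref{t1.3}$ and the perturbation theory of \cite{liu2016global} would give $\|\tilde u\|_{L_{t,x}^4([\tau_n,\sup(\tilde I)))}<\infty$ for large $n$, contradicting the forward blowup. Relabel so that $\|\phi^1\|_{L^2}^2\geq 8\pi(m+1)$. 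Then mass decoupling $(\ref{3.6})$ gives $\|\phi^1\|_{L^2}^2\leq\|\tilde u_0\|_{L^2}^2=m$ and, since $m<16\pi(m+1)$, $\|\phi^j\|_{L^2}^2\leq m-8\pi(m+1)<8\pi(m+1)$ for every $j\geq 2$, so all the profiles with $j\geq 2$ scatter. As in $(\ref{3.16})$--$(\ref{3.19})$, the times $t_n^1$ stay bounded --- otherwise the linear evolution of $\phi^1$ would have vanishing $L_{t,x}^4$ norm in one time direction and $\tilde u$ would fail to blow up in that direction --- so after a further subsequence and absorbing the limit of $t_n^1$ into $\phi^1$ we obtain $\lambda(\tau_n)\,\tilde u(\tau_n,\lambda(\tau_n)\cdot)\rightharpoonup\phi^1$ weakly in $L_m^2$, with a single non-escaping profile. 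Let $v$ be the solution to $(\ref{1.1})$ with $v(0)=\phi^1$; since the total mass $m$ is $<16\pi(m+1)$ at most one profile can blow up, and as the profiles with $j\geq 2$ scatter, the perturbation argument used for $(\ref{3.25})$ shows $v$ must blow up both forward and backward in time, so in particular $v$ is the initial data of a blowup solution.

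The heart of the matter is to show that $\|\phi^1\|_{L^2}^2\in\mathcal M_u$, since minimality of $m=\inf\mathcal M_u$ then forces $\|\phi^1\|_{L^2}^2=m$. Because $\tilde u_0\in\mathcal M_u$, there are $s_k\nearrow\sup(I)$ and $\mu_k>0$ with $\mu_k u(s_k,\mu_k\cdot)\rightharpoonup\tilde u_0$; in a linear profile decomposition of this sequence the non-escaping profile equals $\tilde u_0$, and --- as above, now using $\|u\|_{L^2}^2-m<8\pi(m+1)$ --- all the remaining profiles scatter. The nonlinear profile decomposition and the long-time stability theory of \cite{liu2016global} then upgrade this to $\mu_k u(s_k+\mu_k^2 t,\mu_k\cdot)\rightharpoonup\tilde u(t)$ weakly in $L^2$ for every fixed $t\in\tilde I$, with $s_k+\mu_k^2 t\nearrow\sup(I)$. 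Rescaling by $\lambda(\tau_n)$ and combining with $\lambda(\tau_n)\tilde u(\tau_n,\lambda(\tau_n)\cdot)\rightharpoonup\phi^1$, a diagonal argument over a countable basis of the separable space $L_m^2$ --- in the same spirit as $(\ref{4.13})$--$(\ref{4.19})$ --- produces $\sigma_n\nearrow\sup(I)$ and $\nu_n>0$ with $\nu_n u(\sigma_n,\nu_n\cdot)\rightharpoonup\phi^1$. Hence $\|\phi^1\|_{L^2}^2\in\mathcal M_u$, so $\|\phi^1\|_{L^2}^2=m$, and then $(\ref{3.6})$ forces $\phi^j=0$ for all $j\geq 2$ and $\|w_n^J\|_{L^2}\to 0$. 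Therefore $\lambda(\tau_n)\tilde u(\tau_n,\lambda(\tau_n)\cdot)\to\phi^1$ \emph{strongly} in $L^2$, which is the required precompactness of the orbit of $\tilde u$ modulo scaling, and hence $\tilde u$ is almost periodic.

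The step I expect to be the main obstacle is the transfer in the third paragraph: passing from weak convergence of the data, $\mu_k u(s_k,\mu_k\cdot)\rightharpoonup\tilde u_0$, to weak convergence of the evolutions, $\mu_k u(s_k+\mu_k^2 t,\mu_k\cdot)\rightharpoonup\tilde u(t)$ throughout $\tilde I$, and then chaining this with the profile convergence of $\tilde u(\tau_n)$ through a diagonalization that keeps the time parameters marching up to $\sup(I)$. This requires the nonlinear profile decomposition and the stability theory of Section~3 of \cite{liu2016global} (to push the approximation past the finitely many scattering profiles and to guarantee $s_k+\mu_k^2 t\in I$ for $k$ large), together with some care tracking the scaling parameters --- for example, when $\sup(I)<\infty$ one first notes $\mu_k\to 0$. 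Everything else --- the profile extraction, the at-most-one-blowup-profile dichotomy, and the separability/diagonal argument --- is routine and runs parallel to Section~3 and to the proof of Lemma $\ref{l4.1}$.
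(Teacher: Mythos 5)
Your proposal is correct and follows essentially the same route as the paper: the paper's proof likewise takes an arbitrary sequence of times in $\tilde{I}$, uses the weak convergence $(\ref{4.22})$--$(\ref{4.23})$ of the rescaled, time-translated original solution to $\tilde{u}(\tilde{t}_{n})$, and then repeats the profile-decomposition and diagonalization analysis of Lemma $\ref{l4.1}$ to force a single profile of mass $\inf \mathcal M_{u}$, yielding strong $L^{2}$ compactness modulo scaling and hence almost periodicity. Your write-up simply makes explicit the steps the paper compresses into ``repeating the analysis in Lemma $\ref{l4.1}$,'' including the transfer of weak convergence through the nonlinear flow, which the paper handles via the perturbative argument behind $(\ref{3.25})$ and $(\ref{4.23})$.
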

\begin{proof}
We already know that 
\begin{equation}\label{4.21}
\| \tilde{u} \|_{L_{t,x}^{4}(\inf(\tilde{I}), 0] \times \mathbb{R}^{2})} = \| \tilde{u} \|_{L_{t,x}^{4}([0, \sup(\tilde{I})) \times \mathbb{R}^{2})} = \infty.
\end{equation}
Let $\tilde{t}_{n}$ be a sequence of times in $\tilde{I}$. We know that
\begin{equation}\label{4.22}
\lambda(t_{n'}) u(t_{n'}, \lambda(t_{n'}) \cdot) \rightharpoonup \tilde{u}_{0},
\end{equation}
and for any $\tilde{t}_{n} \in \tilde{I}$,
\begin{equation}\label{4.23}
\lambda(t_{n'}) u(t_{n'} + \lambda(t_{n'})^{2} \tilde{t}_{n}, \lambda(t_{n'}) \cdot) \rightharpoonup \tilde{u}(\tilde{t}_{n}).
\end{equation}

Repeating the analysis in Lemma $\ref{l4.1}$ forces a single profile with mass equal to the infimum of $\mathcal M_{u}$, obtaining almost periodicity.
\end{proof}

Combining Theorem $\ref{t2.4}$ with Lemma $\ref{l4.2}$ and the proof of Lemma $\ref{l4.1}$ implies Theorem $\ref{t1.1}$.

\section{Acknowledgements}
During the writing of this paper, the author was partially supported by NSF grant DMS-2153750.

\bibliography{biblio}
\bibliographystyle{alpha}
\end{document}